\documentclass[12pt]{article}
\usepackage{geometry}
\usepackage{amssymb}
\usepackage{amsmath}
\usepackage{float}
\usepackage{color}
\usepackage{graphicx}
\usepackage{subfigure}
\usepackage{tabu}
\usepackage{booktabs}
\usepackage{array}
\usepackage{caption}
\usepackage{amsthm}
\usepackage{booktabs}
\usepackage{cite}
\usepackage{indentfirst}
\numberwithin{equation}{section}
\newtheorem{theorem}{\bf Theorem}[section]
\usepackage[noend]{algpseudocode}
\usepackage{algorithmicx,algorithm}
\newtheorem{lemma}{Lemma}[section]

\newtheorem{remark}{Remark}[section]
\newtheorem{Definition}{Definition}[section]
\newtheorem{Corollary}{Corollary}[section]
\textwidth=7.0in
\textheight=7.5in
\oddsidemargin=-0.25in
\evensidemargin=-0.25in
\baselineskip=6pt
\parindent0.0in
\parskip6pt
\topmargin=-0.5in
\topskip =0 in
\footskip=0.5in
\hoffset = 0.0in
\voffset = 0.0in
\geometry{a4paper,left=2cm,right=2cm,top=2.5cm,bottom=3.0cm}
	\makeatletter
	\@addtoreset{equation}{section}
	\makeatother

\title{Perturbation analysis for t-product based tensor inverse, Moore-Penrose inverse and tensor system}
\author{
	Zhengbang Cao\footnote{School of Mathematical Sciences, Ocean University of China, Qingdao 266100, China.
		E-Mail: {\tt caozhengbang@stu.ouc.edu.cn}},
	Pengpeng Xie\footnote{Corresponding author. School of Mathematical Sciences, Ocean University of China, Qingdao 266100, China.
		E-Mail: {\tt xie@ouc.edu.cn}.
	}
}
\date{}

\begin{document}
	\maketitle
	\begin{abstract}
	This paper establishes some perturbation analysis for the tensor inverse, the tensor Moore-Penrose inverse and the tensor system
    based on the t-product.
    In the settings of structured perturbations, we generalize the Sherman-Morrison-Woodbury (SMW) formula
    to the t-product tensor scenarios. The SMW formula can be used to perform sensitivity analysis for a multilinear system of equations.
	\\ \hspace*{\fill} \\
	{\bf Key words:} perturbation analysis; Moore-Penrose inverse; tensor inverse; multilinear system; Sherman-Morrison-Woodbury formula
    \\ \hspace*{\fill} \\
    {\bf AMS Classification:} 65F05, 15A69\\
	\end{abstract}
	\section{Introduction}
\hskip 2em	Applications with tensors 
 have become increasingly prevalent in recent years. An order $m$ tensor can be regarded as a multidimensional array, 
  which takes the form
	\begin{equation*}
		\mathcal{A}=(a_{i_{1}\cdots i_{m}})\in \mathbb{R}^{n_1\times n_2\times \cdots \times n_m}.
	\end{equation*}
In this paper, we mainly focus on tensors of order three. The well-known representations of tensors are the CANDECOMP/PARAFAC \cite{Carroll1970} and Tucker models \cite{Tucker1966}.
The authors in \cite{Kilmer2011,Braman2010} proposed an entirely different setting
of tensor operation known as t-product in which the
familiar tools of linear algebra can be extended to better understand third-order tensors. The 
 t-product 
 has been proved to be a useful tool in many
 areas such as image processing \cite{Martin2013,Tarzanagh2018} and 
 signal processing \cite{Liu2018,Sun2019}.

\hskip 2em  Jin et al. \cite{Jin2017} 
defined the 
Moore-Penrose inverse of a tensor by the t-product. 
Later, the 
Drazin inverse 
was investigated by Miao 
et al. in \cite{Miao2019Jordan}. 
In fact, there has already been a lot of work on generalized 
inverses and their perturbation theory based on the Einstein product.
For example, in \cite{Ma2019} Ma et al. considered the perturbation theory for the Moore-Penrose inverses of tensors via the Einstein product.
Chang \cite{Chang2020} established the Einstein-product based Sherman-Morrison-Woodbury (SMW) formula for tensors.
  However, as far as we know, limited research has been done on the t-product based perturbation analysis except the weighted T-core-EP inverse \cite{Liu2021_2}. 
  Therefore, 
  this paper is devoted to further developing 
  the perturbation results of 
  tensor inverses, Moore-Penrose inverses and tensor system under the t-product. Especially, for better performing
  sensitivity analysis for a multilinear system of equations, we develop the SMW formula for tensors under the t-product.
	
	\hskip 2em This paper is organized as follows. In section 2, we review basic definitions and notations.
Section 3 details perturbation analysis of the tensor inverse, tensor equations and the SMW formula for the tensor inverse.
Perturbation results of the Moore-Penrose inverse and the least squares problem of tensors are presented in section 4.
In section 5,  we conduct sensitivity analysis for a multilinear system of equations by the SMW formula.
Section 6 gives some conclusions.
	
	\section{Preliminaries}
	\hskip 2em 
	Throughout this paper, we follow notations used in \cite{Kilmer2011,Kilmer2013}.
	Third-order tensors, denoted by calligraphic script letters 
	 with real entries 
	 are considered. Capital letters 
	 refer to matrices, and lower case letters 
	 to vectors. The $i$th frontal slice of tensor $\mathcal{A}$ will be denoted by $A^{(i)}$.
	For $\mathcal{A}\in \mathbb{R}^{n_1\times n_2 \times n_3}$, 
	define \texttt{bcirc} as a block circulant matrix of size $n_1n_3\times n_2n_3$
	\begin{equation*}
		\texttt{bcirc}(\mathcal{A})=\begin{bmatrix}
			A^{(1)}&A^{(n_3)}&\cdots&A^{(2)}\\
			A^{(2)}&A^{(1)}&\cdots&A^{(3)}\\
			\vdots&\vdots&\ddots&\vdots\\
			A^{(n_3)}&A^{(n_3-1)}&\cdots&A^{(1)}
		\end{bmatrix}.
	\end{equation*}
The command \texttt{unfold} reshapes a tensor $\mathcal{A} \in \mathbb{R}^{n_1\times n_2 \times n_3}$ into an $n_1n_3\times n_2$ block-column vector (the first block-column of $\texttt{bcirc}(\mathcal{A}))$, while \texttt{fold} is the inverse, i.e., $\texttt{fold}(\texttt{unfold}(\mathcal{A}))=\mathcal{A}$.
	\begin{Definition}{(t-product) \cite{Kilmer2013}}
		Let $\mathcal{A}\in \mathbb{R}^{n_1\times n_2 \times n_3}$ and $\mathcal{B}\in \mathbb{R}^{n_2\times n_4 \times n_3}$. The t-product $\mathcal{A}*\mathcal{B}$ is the tensor $\mathcal{C}\in \mathbb{R}^{n_1\times n_4 \times n_3}$ defined by
		\begin{equation*}
			\mathcal{C}=\mathtt{fold}(\mathtt{bcirc}(\mathcal{A})\cdot \mathtt{unfold}(\mathcal{B})).
		\end{equation*}
	\end{Definition}
	\hskip 2em Note that the t-product reduces to the standard matrix multiplication when $n_3=1$.
	The Discrete Fourier Transformation (DFT) plays a core role in tensor-tensor product. 
	The DFT on $v\in \mathbb{R}^n$, denoted as $\bar{v}$, is given by
	$\bar{v}=F_nv\in \mathbb{C}^n.$
	Here $F_n$ is the DFT matrix 
			$F_n=(\omega_{jk})_{n\times n}$,
		where $\omega_{jk}=e^{{-2(j-1)(k-1)\pi \texttt{i}/n  }}$ with $\texttt{i}=\sqrt{-1}$ and $F_n$ satisfies $F_n^{*}F_n=F_nF_n^{*}=nI_n.$ The block circulant matrix can be block diagonalized by the DFT, i.e.,
		\begin{equation}\label{DFT}
			(F_{n_3}\otimes I_{n_1})\cdot\texttt{bcirc}(\mathcal{A})\cdot(F_{n_3}^{-1}\otimes I_{n_2})=\bar{A},
		\end{equation}
	
	where $\otimes$ denotes the Kronecker product and  
	$\bar{A}=\texttt{diag}(\bar{A}^{(1)},\bar{A}^{(2)},\ldots,\bar{A}^{(n_3)})$.
    By taking the Fast Fourier Transform (FFT) along each tubal scalar of $\mathcal{A}$, $\bar{\mathcal{A}}=\texttt{fold}(\bar{A})=\texttt{fft}(\mathcal{A},[],3)$ and $\mathcal{A}=\texttt{ifft}(\bar{\mathcal{A}},[],3).$

	\begin{Definition}{(identity tensor) \cite{Kilmer2013}}
		The identity tensor $\mathcal{I}\in \mathbb{R}^{n\times n\times n_3}$ is the tensor with 
		$I^{(1)}$ being the $n\times n$ identity matrix, and other frontal slices being zeros.
	\end{Definition}
	\begin{Definition}{(tensor transpose) \cite{Kilmer2013}}
		If $\mathcal{A}\in \mathbb{R}^{n_1\times n_2\times n_3}$, then $\mathcal{A}^{\mathrm{T}}$ is the $n_2\times n_1\times n_3$ tensor obtained by transposing each of the frontal slices and then reversing the order of transposed frontal slices 2 through $n_3$.
	\end{Definition}
	\begin{Definition}{(inverse tensor) \cite{Kilmer2013}}
		An $n\times n\times n_3$ tensor $\mathcal{A}$ has an inverse $\mathcal{B}$, provided that $\mathcal{A}*\mathcal{B}=\mathcal{I}_{nnn_3}$ and $ \mathcal{B}*\mathcal{A}=\mathcal{I}_{nnn_3}.$
	\end{Definition}
\begin{Definition}
The range of tensor $\mathcal{A} \in \mathbb{R}^{n_1\times n_2\times n_3}$
is defined as
\begin{equation*}
	\mathrm{R}(\mathcal{A})=\left\{\mathcal{A}*\mathcal{X}:\mathcal{X}\in \mathbb{R}^{n_2\times 1\times n_3}\right\}.
\end{equation*}
\end{Definition}
\begin{Definition}\cite{Kilmer2011}
	The inner product of $\mathcal{X}\in \mathbb{R}^{n_1\times 1\times n_3}$ and $\mathcal{Y}\in \mathbb{R}^{n_1\times 1\times n_3}$ is defined as
	\begin{equation*}
		\left\langle\mathcal{X},\mathcal{Y}\right\rangle=\mathcal{X}^{\mathrm{T}}*\mathcal{Y},
	\end{equation*}
and if $\left\langle\mathcal{X},\mathcal{Y}\right\rangle=\mathbf{0}$, we call that the tensor $\mathcal{X}$ is orthogonal to $\mathcal{Y}$.
\end{Definition}
	\begin{Definition}{(Moore-Penrose inverse of tensor) \cite{Jin2017}}
			Let $\mathcal{A}\in \mathbb{R}^{n_1\times n_2 \times n_3}$. If there exists a tensor $\mathcal{X}\in \mathbb{R}^{n_2\times n_1\times n_3 }$ such that
		\begin{equation*}		\mathcal{A}*\mathcal{X}*\mathcal{A}=\mathcal{A},\ \mathcal{X}*\mathcal{A}*\mathcal{X}=\mathcal{X},\ {(\mathcal{A}*\mathcal{X})}^{\mathrm{T}}=\mathcal{A}*\mathcal{X},\ {(\mathcal{X}*\mathcal{A})}^{\mathrm{T}}=\mathcal{X}*\mathcal{A},
		\end{equation*}
	then $\mathcal{X}$ is called the Moore-Penrose inverse of the tensor $\mathcal{A}$ and is denoted by $\mathcal{A}^{\dagger}$.
	\end{Definition}
	\begin{Definition}{(multirank)}\cite{Kilmer2013}
		The multirank of tensor $\mathcal{A}$ is the tubal scalar $\rho = \mathrm{multirank}(\mathcal{A})$ such that $\rho^{(i)}$ is the rank of the ith matrix $\bar{A}^{(i)}$.
	\end{Definition}
	\begin{Definition}\cite{Lu2020,Kilmer2013}
	The Frobenius norm and the spectral norm of $\mathcal{A}\in \mathbb{R}^{n_1\times n_2\times n_3}$ are defined as ${\left\|\mathcal{A}\right\|}_F=\sqrt{\sum_{ijk}{\left|a_{ijk}\right|}^2},\ {\left\|\mathcal{A}\right\|}_2={\left\|\mathtt{bcirc}(\mathcal{A})\right\|}_2$.
	\end{Definition}
	\begin{lemma}\cite{Lu2020}\ ${\left\|\mathcal{A}\right\|}_F=\frac{1}{\sqrt{n_3}}{\left\|\bar{A}\right\|}_F,\ {\left\|\mathcal{A}\right\|}_2={\left\|\bar{A}\right\|}_2.$
	\end{lemma}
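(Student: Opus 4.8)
The plan is to reduce both identities to the single algebraic fact that, although the DFT matrix $F_{n_3}$ is not unitary, its normalization $U := \tfrac{1}{\sqrt{n_3}} F_{n_3}$ is, by virtue of the relation $F_{n_3}^{*} F_{n_3} = n_3 I_{n_3}$ recorded in the definition of $F_n$. Rewriting the block-diagonalization \eqref{DFT} in terms of $U$ and using $F_{n_3}^{-1} = \tfrac{1}{n_3}F_{n_3}^{*}$, the factors $\sqrt{n_3}$ and $1/\sqrt{n_3}$ cancel, so that
\[
	\bar{A} = (U\otimes I_{n_1})\cdot\texttt{bcirc}(\mathcal{A})\cdot(U\otimes I_{n_2})^{*},
\]
a genuine unitary similarity transformation, since a Kronecker product of unitary matrices is again unitary.

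For the spectral norm I would invoke unitary invariance directly: $\|\cdot\|_2$ is unchanged under left and right multiplication by unitary matrices, so the displayed identity gives $\|\bar{A}\|_2 = \|\texttt{bcirc}(\mathcal{A})\|_2$, which is exactly $\|\mathcal{A}\|_2$ by definition. Nothing further is required here.

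For the Frobenius norm I would use the same unitary similarity together with the unitary invariance of $\|\cdot\|_F$ to obtain $\|\bar{A}\|_F = \|\texttt{bcirc}(\mathcal{A})\|_F$. It then remains to relate $\|\texttt{bcirc}(\mathcal{A})\|_F$ back to $\|\mathcal{A}\|_F$: inspecting the block-circulant structure, each frontal slice $A^{(i)}$ appears exactly once in every block row, hence $n_3$ times in total, which yields $\|\texttt{bcirc}(\mathcal{A})\|_F^2 = n_3 \sum_i \|A^{(i)}\|_F^2 = n_3 \|\mathcal{A}\|_F^2$. Combining the two equalities gives $\|\mathcal{A}\|_F = \tfrac{1}{\sqrt{n_3}}\|\bar{A}\|_F$. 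Equivalently, one may argue fiberwise: each tube $v\in\mathbb{R}^{n_3}$ satisfies $\|F_{n_3}v\|_2^2 = n_3\|v\|_2^2$ by Parseval's identity, and summing over all tubes reproduces the factor $n_3$.

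The only real care required is the bookkeeping of the $\sqrt{n_3}$ normalization. The temptation is to apply unitary invariance directly to \eqref{DFT}, but $F_{n_3}\otimes I_{n_1}$ is not unitary and the result would be off by powers of $n_3$; passing to $U$ first is what makes the two norms behave differently. The spectral norm is completely insensitive to the cancelled scaling, whereas the Frobenius identity retains the factor $1/\sqrt{n_3}$ precisely because of the residual slice-multiplicity inside the block circulant matrix. Beyond this, the argument is routine.
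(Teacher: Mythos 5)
Your proof is correct. Note that the paper gives no proof of this lemma at all --- it is stated by citation to \cite{Lu2020} --- so there is no internal argument to compare against; your derivation (normalizing $F_{n_3}$ to the unitary $U=\tfrac{1}{\sqrt{n_3}}F_{n_3}$ so that \eqref{DFT} becomes a genuine unitary equivalence, then invoking unitary invariance of both norms together with the fact that each frontal slice $A^{(i)}$ appears exactly $n_3$ times in $\texttt{bcirc}(\mathcal{A})$) is precisely the standard argument of the cited reference, with the $\sqrt{n_3}$ bookkeeping handled correctly.
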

	By the results above, we 
	summarize several properties of the two norms.
	\begin{theorem}
		 For $\mathcal{A}$ and $\mathcal{B}$ of appropriate size, the following statements hold:\\
		(a) ${\left\|\mathcal{A}*\mathcal{B}\right\|}_F\le {\left\|\mathcal{A}\right\|}_2\cdot{\left\|\mathcal{B}\right\|}_F$, ${\left\|\mathcal{A}*\mathcal{B}\right\|}_F\le {\left\|\mathcal{A}\right\|}_F\cdot{\left\|\mathcal{B}\right\|}_2$.\\
		(b) ${\left\|\mathcal{A}*\mathcal{B}\right\|}_2\le{\left\|\mathcal{A}\right\|}_2\cdot{\left\|\mathcal{B}\right\|}_2$,\
		${\left\|\mathcal{A}*\mathcal{B}\right\|}_F\le \sqrt{n_3}{\left\|\mathcal{A}\right\|}_F\cdot{\left\|\mathcal{B}\right\|}_F$.\\
		(c) ${\left\|\mathcal{A}+\mathcal{B}\right\|}_2\le{\left\|\mathcal{A}\right\|}_2+{\left\|\mathcal{B}\right\|}_2$,\
		 ${\left\|\mathcal{A}+\mathcal{B}\right\|}_F\le{\left\|\mathcal{A}\right\|}_F+{\left\|\mathcal{B}\right\|}_F$,\
		 ${\left\|\mathcal{A}\right\|}_2\le\sqrt{n_3}{\left\|\mathcal{A}\right\|}_F$.
	\end{theorem}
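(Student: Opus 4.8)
The plan is to transport every inequality to the Fourier domain, where the t-product becomes an ordinary (block-diagonal) matrix product and Lemma~2.1 lets me trade tensor norms for matrix norms. The one structural fact I need up front is the product identity $\overline{\mathcal{A}*\mathcal{B}}=\bar A\,\bar B$. To get it, I would first record that $\mathtt{bcirc}$ is a ring homomorphism, $\mathtt{bcirc}(\mathcal{A}*\mathcal{B})=\mathtt{bcirc}(\mathcal{A})\,\mathtt{bcirc}(\mathcal{B})$: indeed, a product of block circulant matrices is again block circulant, and its first block column is exactly $\mathtt{bcirc}(\mathcal{A})\cdot\mathtt{unfold}(\mathcal{B})=\mathtt{unfold}(\mathcal{A}*\mathcal{B})$. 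Conjugating this identity by $F_{n_3}\otimes I$ and invoking the diagonalization~\eqref{DFT} then gives $\bar C=\bar A\bar B$ (equivalently $\bar C^{(i)}=\bar A^{(i)}\bar B^{(i)}$ on each frontal slice). With this in hand, each claim reduces to a textbook matrix-norm estimate applied to $\bar A$ and $\bar B$.

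For part (a), I would write $\|\mathcal{A}*\mathcal{B}\|_F=\tfrac{1}{\sqrt{n_3}}\|\bar C\|_F=\tfrac{1}{\sqrt{n_3}}\|\bar A\bar B\|_F$ by Lemma~2.1, then apply the standard matrix inequality $\|\bar A\bar B\|_F\le\|\bar A\|_2\|\bar B\|_F$ and redistribute the factor $\tfrac{1}{\sqrt{n_3}}$ using $\|\mathcal{A}\|_2=\|\bar A\|_2$ and $\|\mathcal{B}\|_F=\tfrac{1}{\sqrt{n_3}}\|\bar B\|_F$; the mirror inequality follows identically from $\|\bar A\bar B\|_F\le\|\bar A\|_F\|\bar B\|_2$. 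The first estimate of part (b) is even more direct: $\|\mathcal{A}*\mathcal{B}\|_2=\|\bar A\bar B\|_2\le\|\bar A\|_2\|\bar B\|_2=\|\mathcal{A}\|_2\|\mathcal{B}\|_2$ by submultiplicativity of the spectral norm.

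For part (c), the Frobenius triangle inequality is immediate because $\|\cdot\|_F$ is just the Euclidean norm of the entrywise vector, so the ordinary triangle inequality applies verbatim; the spectral triangle inequality follows from linearity of $\mathtt{bcirc}$ together with the matrix triangle inequality, $\|\mathcal{A}+\mathcal{B}\|_2=\|\mathtt{bcirc}(\mathcal{A})+\mathtt{bcirc}(\mathcal{B})\|_2\le\|\mathcal{A}\|_2+\|\mathcal{B}\|_2$. The comparison $\|\mathcal{A}\|_2\le\sqrt{n_3}\|\mathcal{A}\|_F$ comes from $\|\mathcal{A}\|_2=\|\bar A\|_2\le\|\bar A\|_F=\sqrt{n_3}\|\mathcal{A}\|_F$, using $\|\cdot\|_2\le\|\cdot\|_F$ for matrices. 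Finally, I would chain this last comparison with part (a) to obtain the remaining estimate of part (b), $\|\mathcal{A}*\mathcal{B}\|_F\le\|\mathcal{A}\|_2\|\mathcal{B}\|_F\le\sqrt{n_3}\|\mathcal{A}\|_F\|\mathcal{B}\|_F$; this is why I would establish the $\sqrt{n_3}$ comparison before finishing (b). No step presents a genuine obstacle, but the item deserving the most care is the homomorphism identity $\overline{\mathcal{A}*\mathcal{B}}=\bar A\bar B$, since every multiplicative bound rests on it, and one must argue through the block-circulant structure rather than only the defining $\mathtt{fold}/\mathtt{unfold}$ formula to justify it.
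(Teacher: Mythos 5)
Your proof is correct and follows essentially the same route as the paper: pass to the Fourier domain via Lemma~2.1 and invoke standard matrix-norm inequalities. The only difference is that you explicitly justify the product identity $\overline{\mathcal{A}*\mathcal{B}}=\bar{A}\bar{B}$ through the block-circulant homomorphism, a step the paper's one-line proof leaves implicit.
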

	\begin{proof}
		It can be deduced from Lemma 2.1 and the properties of matrix norm that
		${\left\|\mathcal{A}*\mathcal{B}\right\|}_F=\frac{1}{\sqrt{n_3}}{\left\|\bar{A}\cdot\bar{B}\right\|}_F\le\frac{1}{\sqrt{n_3}}{\left\|\bar{A}\right\|}_F\cdot{\left\|\bar{B}\right\|}_2={\left\|\mathcal{A}\right\|}_2\cdot{\left\|\mathcal{B}\right\|}_F$, and the rest of the proof
follows analogously.
	\end{proof}
	\section{Perturbation for tensor inverses and tensor equations}
		\hskip 2em In this section, we concentrate on the perturbation theory related to the inverses of tensors, and give two classical perturbation theorems. Furthermore, a classical result of structural perturbation of matrix, i.e. Sherman-Morrison-Woodbury identity is also extended to tensors. 
		Following similar approach, we derive perturbation theorems for tensor equations. It is worth noting that these results still apply for matrices.
	
	
	\begin{theorem}
		Let $\mathcal{A} \in\mathbb{R}^{n\times n\times n_3}$ and $\mathcal{B}=\mathcal{A}+\mathcal{E}$ be invertible respectively, then
		\begin{equation*}\label{c}
			\frac{{\left\|\mathcal{A}^{-1}-\mathcal{B}^{-1}\right\|}_F}{{\left\|\mathcal{A}^{-1}\right\|}_F}\le\kappa_1\frac{{\left\|\mathcal{E}\right\|}_2}{{\left\|\mathcal{A}\right\|}_F} \quad and\quad
			\frac{{\left\|\mathcal{A}^{-1}-\mathcal{B}^{-1}\right\|}_2}{{\left\|\mathcal{A}^{-1}\right\|}_2}\le\kappa_2\frac{{\left\|\mathcal{E}\right\|}_2}{{\left\|\mathcal{A}\right\|}_2},
		\end{equation*}
	where $\kappa_1={\left\|\mathcal{A}\right\|}_F\cdot{\left\|\mathcal{B}^{-1}\right\|}_2$ and   $\kappa_2={\left\|\mathcal{A}\right\|}_2\cdot{\left\|\mathcal{B}^{-1}\right\|}_2$.
	\end{theorem}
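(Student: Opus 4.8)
The plan is to reduce both inequalities to a single algebraic identity, the tensor analogue of the classical resolvent identity,
\[
\mathcal{A}^{-1}-\mathcal{B}^{-1}=\mathcal{A}^{-1}*\mathcal{E}*\mathcal{B}^{-1},
\]
and then to control the three factors on the right using the submultiplicative norm inequalities already recorded in Theorem 2.1. Everything else is bookkeeping.

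First I would establish the identity. Since $\mathcal{B}=\mathcal{A}+\mathcal{E}$ we have $\mathcal{B}-\mathcal{A}=\mathcal{E}$, and because the t-product is associative and distributes over addition (both inherited from the corresponding matrix properties, as $\mathtt{bcirc}$ is a ring homomorphism), one computes $\mathcal{A}^{-1}*\mathcal{E}*\mathcal{B}^{-1}=\mathcal{A}^{-1}*(\mathcal{B}-\mathcal{A})*\mathcal{B}^{-1}$, which expands to $\mathcal{A}^{-1}*\mathcal{B}*\mathcal{B}^{-1}-\mathcal{A}^{-1}*\mathcal{A}*\mathcal{B}^{-1}=\mathcal{A}^{-1}-\mathcal{B}^{-1}$, using $\mathcal{A}^{-1}*\mathcal{A}=\mathcal{B}*\mathcal{B}^{-1}=\mathcal{I}$. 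The invertibility of both $\mathcal{A}$ and $\mathcal{B}$, assumed in the statement, is exactly what legitimizes writing the inverses and the cancellations above.

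For the Frobenius estimate I would take norms and split the three factors so that the Frobenius norm lands on $\mathcal{A}^{-1}$ while the spectral norm handles the remaining two. Applying first $\|\mathcal{X}*\mathcal{Y}\|_F\le\|\mathcal{X}\|_F\cdot\|\mathcal{Y}\|_2$ from Theorem 2.1(a) with $\mathcal{X}=\mathcal{A}^{-1}$ and $\mathcal{Y}=\mathcal{E}*\mathcal{B}^{-1}$, then the spectral submultiplicativity $\|\mathcal{E}*\mathcal{B}^{-1}\|_2\le\|\mathcal{E}\|_2\cdot\|\mathcal{B}^{-1}\|_2$ of Theorem 2.1(b), yields
\[
\|\mathcal{A}^{-1}-\mathcal{B}^{-1}\|_F\le\|\mathcal{A}^{-1}\|_F\cdot\|\mathcal{E}\|_2\cdot\|\mathcal{B}^{-1}\|_2.
\]
Dividing by $\|\mathcal{A}^{-1}\|_F$ and then multiplying and dividing the right-hand side by $\|\mathcal{A}\|_F$ reassembles the constant $\kappa_1=\|\mathcal{A}\|_F\cdot\|\mathcal{B}^{-1}\|_2$ and produces the factor $\|\mathcal{E}\|_2/\|\mathcal{A}\|_F$. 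The spectral estimate is even shorter: applying the spectral submultiplicativity of Theorem 2.1(b) twice to $\mathcal{A}^{-1}*\mathcal{E}*\mathcal{B}^{-1}$ gives $\|\mathcal{A}^{-1}-\mathcal{B}^{-1}\|_2\le\|\mathcal{A}^{-1}\|_2\cdot\|\mathcal{E}\|_2\cdot\|\mathcal{B}^{-1}\|_2$, and the identical rescaling by $\|\mathcal{A}\|_2$ reassembles $\kappa_2$.

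I do not expect a genuine obstacle in this argument. The only point that requires care is the choice of which inequality to invoke at each factor in the Frobenius case: Theorem 2.1 supplies both a mixed Frobenius–spectral and a purely spectral form of submultiplicativity, and the estimate must be arranged so that $\|\mathcal{A}^{-1}\|_F$ is precisely the factor that cancels upon division by $\|\mathcal{A}^{-1}\|_F$. Once that split is fixed, both bounds follow immediately from the resolvent identity.
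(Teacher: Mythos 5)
Your proposal is correct and follows essentially the same route as the paper: the paper's proof also rests on the identity $\mathcal{B}^{-1}-\mathcal{A}^{-1}=\mathcal{A}^{-1}*(\mathcal{A}-\mathcal{B})*\mathcal{B}^{-1}$ (equation (3.1) there, the same resolvent identity up to sign), followed by the mixed Frobenius--spectral bound $\|\mathcal{A}^{-1}-\mathcal{B}^{-1}\|_F\le\|\mathcal{A}^{-1}\|_F\|\mathcal{E}\|_2\|\mathcal{B}^{-1}\|_2$ from Theorem 2.1 and the rescaling by $\|\mathcal{A}\|_F$ (resp.\ $\|\mathcal{A}\|_2$). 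If anything, your write-up is cleaner than the paper's, whose intermediate display contains a typographical slip (a spurious $\sqrt{n_3}\,\|\mathcal{B}^{-1}\|_F$ factor) that your version avoids.
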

	\begin{proof}
		Due to the fact that
		\begin{equation}\label{inversechange}
			\mathcal{B}^{-1}-\mathcal{A}^{-1}=\mathcal{A}^{-1}*(\mathcal{A}-\mathcal{B})*\mathcal{B}^{-1}
		\end{equation}
and from Theorem 2.1, we know
	\begin{equation*}
		{\left\|\mathcal{A}^{-1}-\mathcal{B}^{-1}\right\|}_F\le{\left\|\mathcal{A}^{-1}\right\|}_F{\left\|\mathcal{A}-\mathcal{B}\right\|}_2{\left\|\mathcal{B}^{-1}\right\|}_2.
	\end{equation*}
	Thus, ${{\left\|\mathcal{A}^{-1}-\mathcal{B}^{-1}\right\|}_F}/{{\left\|\mathcal{A}^{-1}\right\|}_F}\le\sqrt{n_3}{{\left\|\mathcal{E}\right\|}_2}{{\left\|\mathcal{B}^{-1}\right\|}_F}=\kappa_1{{\left\|\mathcal{E}\right\|}_2}/{{\left\|\mathcal{A}\right\|}_F}.$ A similar procedure can be applied for the spectral norm.
	\end{proof}
	
Since the inverse of tensor after perturbation is involed in the upper bounds of Theorem 3.1, it is not adequate for practical use. In the following, we impove the results by two lemmas below.
	\begin{lemma}
		For $\mathcal{A}\in\mathbb{R}^{n\times n\times n_3}$, ${\left\|(\mathcal{I}-\mathcal{A})^{-1}\right\|}_2\le(1-{\left\|\mathcal{A}\right\|}_2)^{-1}.$
	\end{lemma}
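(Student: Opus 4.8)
The plan is to prove the t-product analogue of the classical Neumann bound for matrices. Implicit in the statement is the hypothesis $\|\mathcal{A}\|_2 < 1$, which is exactly what makes the right-hand side positive and guarantees that $\mathcal{I}-\mathcal{A}$ is invertible; I would state this at the outset. The central idea is to realize $(\mathcal{I}-\mathcal{A})^{-1}$ as the convergent Neumann series $\sum_{k=0}^{\infty}\mathcal{A}^{*k}$, where $\mathcal{A}^{*k}$ denotes the $k$-fold t-product of $\mathcal{A}$ with itself (and $\mathcal{A}^{*0}=\mathcal{I}$), and then to bound its norm termwise.

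First I would use the submultiplicativity of the spectral norm from Theorem 2.1(b) to obtain $\|\mathcal{A}^{*k}\|_2 \le \|\mathcal{A}\|_2^{k}$. Since $\|\mathcal{A}\|_2 < 1$, the partial sums $\mathcal{S}_N = \sum_{k=0}^{N}\mathcal{A}^{*k}$ form a Cauchy sequence in the spectral (equivalently Frobenius) norm by Theorem 2.1(c), hence converge to a limit $\mathcal{S}$. A telescoping computation, $(\mathcal{I}-\mathcal{A})*\mathcal{S}_N = \mathcal{I}-\mathcal{A}^{*(N+1)} \to \mathcal{I}$, together with the analogous identity on the other side, identifies $\mathcal{S}$ with $(\mathcal{I}-\mathcal{A})^{-1}$. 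Finally the triangle inequality and the geometric estimate yield $\|(\mathcal{I}-\mathcal{A})^{-1}\|_2 \le \sum_{k=0}^{\infty}\|\mathcal{A}\|_2^{k} = (1-\|\mathcal{A}\|_2)^{-1}$.

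An alternative, and perhaps shorter, route stays entirely in the Fourier domain, in the spirit of Lemma 2.1. By \eqref{DFT} the t-product corresponds to ordinary multiplication of the block-diagonal matrices $\bar{A}$, so $(\mathcal{I}-\mathcal{A})^{-1}$ corresponds to $(\bar{I}-\bar{A})^{-1}$, which is block diagonal with blocks $(I_n-\bar{A}^{(i)})^{-1}$. Using Lemma 2.1 and the fact that the spectral norm of a block-diagonal matrix is the maximum of the blocks' norms, I would reduce to the classical matrix Neumann bound applied to each $\bar{A}^{(i)}$, together with $\|\bar{A}^{(i)}\|_2 \le \|\bar{A}\|_2 = \|\mathcal{A}\|_2$, to get $\|(\mathcal{I}-\mathcal{A})^{-1}\|_2 = \max_i \|(I_n-\bar{A}^{(i)})^{-1}\|_2 \le (1-\|\mathcal{A}\|_2)^{-1}$.

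The main obstacle in the first approach is the analytic justification---convergence of the series in the tensor norm and the verification, via telescoping, that the limit really is the two-sided inverse---rather than the final inequality, which is then immediate. In the Fourier approach the only point needing care is the correspondence between the tensor inverse and the block-diagonal matrix inverse under \eqref{DFT}; once that is in place the bound drops out of the matrix theory. I would likely present the Fourier reduction as the main proof, since it keeps everything consistent with the paper's repeated use of Lemma 2.1.
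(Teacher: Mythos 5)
Your proposal is correct, but both of your routes differ genuinely from the paper's proof, which is much shorter: starting from $(\mathcal{I}-\mathcal{A})*(\mathcal{I}-\mathcal{A})^{-1}=\mathcal{I}$, the paper writes the identity $(\mathcal{I}-\mathcal{A})^{-1}=\mathcal{I}+\mathcal{A}*(\mathcal{I}-\mathcal{A})^{-1}$, takes spectral norms, and applies the triangle inequality and submultiplicativity from Theorem 2.1 to get $\|(\mathcal{I}-\mathcal{A})^{-1}\|_2\le 1+\|\mathcal{A}\|_2\,\|(\mathcal{I}-\mathcal{A})^{-1}\|_2$, from which the bound follows by rearrangement (silently using $\|\mathcal{A}\|_2<1$). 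Like the statement itself, this argument simply presupposes that $\mathcal{I}-\mathcal{A}$ is invertible; in the paper, existence is handled separately by Lemma 3.2 before Lemma 3.1 is invoked in Theorem 3.2. Both of your arguments prove strictly more --- that $\|\mathcal{A}\|_2<1$ by itself forces invertibility of $\mathcal{I}-\mathcal{A}$ --- at the price of extra machinery: the Neumann-series route needs the convergence and telescoping discussion you describe, and the Fourier route needs the correspondence between the tensor inverse and the inverse of the block-diagonal matrix $\bar{I}-\bar{A}$ under \eqref{DFT}, together with the fact that the spectral norm of a block-diagonal matrix is the maximum of the blocks' norms. What the paper's identity trick buys is brevity and the complete absence of any convergence discussion; what yours buy is self-containedness --- indeed, your Fourier reduction is essentially the same mechanism the paper itself uses to prove Lemma 3.2, so presented your way, Lemma 3.1 would absorb that invertibility statement. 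Either of your proofs is acceptable; just state the hypothesis $\|\mathcal{A}\|_2<1$ explicitly, as you propose, since without it the claimed bound is vacuous or the rearrangement step fails.
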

	\begin{proof}
		It is obvious that
		\begin{equation*}
			(\mathcal{I}-\mathcal{A})^{-1}=\mathcal{I}+\mathcal{A}*(\mathcal{I}-\mathcal{A})^{-1}.
		\end{equation*}
Taking the spectral norm of both sides, we get
	\begin{equation*}
		{\left\|(\mathcal{I}-\mathcal{A})^{-1}\right\|}_2\le{\left\|\mathcal{I}\right\|}_2+{\left\|\mathcal{A}*(\mathcal{I}-\mathcal{A})^{-1}\right\|}_2\\
		\le 1+	{\left\|\mathcal{A}\right\|}_2{\left\|(\mathcal{I-A})^{-1}\right\|}_2,
	\end{equation*}
where Theorem 2.1 is exploited.
	\end{proof}
	\begin{lemma}
		Let $\mathcal{A}\in \mathbb{R}^{n\times n\times n_3}$ be invertible, and $\mathcal{E}\in \mathbb{R}^{n\times n\times n_3}$. If ${\left\|\mathcal{A}^{-1}\right\|}_2{\left\|\mathcal{E}\right\|}_2<1,$
		then $\mathcal{B}=\mathcal{A}+\mathcal{E}$ is invertible.
		\begin{proof}
			From the assumption, we have $\|\bar{A}^{-1}\|_2\|\bar{E}\|_2\le1$. According to \cite[Theorem 8.1.2]{Wang2018}, $\bar{B}$ is invertible, which implies that $\mathcal{B}$ is invertible.
		\end{proof}
	\end{lemma}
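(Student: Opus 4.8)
The plan is to pass to the Fourier domain, where invertibility of a t-product tensor reduces to ordinary matrix invertibility, and then invoke the classical matrix perturbation bound. The bridge is the block-diagonalization identity \eqref{DFT}: a tensor $\mathcal{A}\in\mathbb{R}^{n\times n\times n_3}$ is invertible in the t-product sense if and only if the block-diagonal matrix $\bar{A}=\texttt{diag}(\bar{A}^{(1)},\ldots,\bar{A}^{(n_3)})$ is invertible. Indeed, $\texttt{bcirc}(\mathcal{A}*\mathcal{A}^{-1})=\texttt{bcirc}(\mathcal{A})\cdot\texttt{bcirc}(\mathcal{A}^{-1})$, so $\mathcal{A}*\mathcal{A}^{-1}=\mathcal{I}$ translates, under \eqref{DFT}, into $\bar{A}\cdot\overline{\mathcal{A}^{-1}}=I$. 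In particular $\overline{\mathcal{A}^{-1}}=\bar{A}^{-1}$, and hence, by Lemma 2.1, ${\left\|\mathcal{A}^{-1}\right\|}_2={\left\|\bar{A}^{-1}\right\|}_2$.

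First I would rewrite the hypothesis in the Fourier domain. Combining Lemma 2.1 with the identity $\overline{\mathcal{A}^{-1}}=\bar{A}^{-1}$ just noted, the assumption ${\left\|\mathcal{A}^{-1}\right\|}_2{\left\|\mathcal{E}\right\|}_2<1$ becomes exactly ${\left\|\bar{A}^{-1}\right\|}_2{\left\|\bar{E}\right\|}_2<1$. Since the DFT is linear, $\bar{B}=\bar{A}+\bar{E}$, and the problem reduces to the purely matrix-theoretic statement: if $\bar{A}$ is invertible and ${\left\|\bar{A}^{-1}\right\|}_2{\left\|\bar{E}\right\|}_2<1$, then $\bar{A}+\bar{E}$ is invertible. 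This is the classical result \cite[Theorem 8.1.2]{Wang2018}; alternatively, writing $\bar{B}=\bar{A}(I+\bar{A}^{-1}\bar{E})$, one has ${\left\|\bar{A}^{-1}\bar{E}\right\|}_2\le{\left\|\bar{A}^{-1}\right\|}_2{\left\|\bar{E}\right\|}_2<1$, so $I+\bar{A}^{-1}\bar{E}$ is invertible by the Neumann series, whence $\bar{B}$ is invertible.

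Finally I would transfer the conclusion back to the tensor. Invertibility of $\bar{B}$ means every frontal slice $\bar{B}^{(i)}$ is invertible, so by \eqref{DFT} the matrix $\texttt{bcirc}(\mathcal{B})$ is invertible; reversing the fold/unfold correspondence that defines the t-product then yields a tensor $\mathcal{C}$ with $\mathcal{B}*\mathcal{C}=\mathcal{C}*\mathcal{B}=\mathcal{I}$, i.e. $\mathcal{B}=\mathcal{A}+\mathcal{E}$ is invertible. The step I expect to require the most care is the equivalence ``$\mathcal{A}$ invertible $\iff\bar{A}$ invertible'' together with the identity $\overline{\mathcal{A}^{-1}}=\bar{A}^{-1}$: everything else is routine once the hypothesis has been moved into the Fourier domain, but it is precisely this correspondence that lets Lemma 2.1 convert the tensor spectral-norm condition into the matrix condition required by the classical perturbation theorem.
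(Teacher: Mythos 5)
Your proof is correct and follows essentially the same route as the paper: transfer the hypothesis to the Fourier domain via \eqref{DFT} and Lemma 2.1, invoke the classical matrix perturbation result (the paper cites \cite[Theorem 8.1.2]{Wang2018}, for which your Neumann-series argument is an equivalent substitute), and conclude that $\bar{B}$, hence $\mathcal{B}$, is invertible. The only difference is that you spell out the correspondence $\overline{\mathcal{A}^{-1}}=\bar{A}^{-1}$ and the equivalence of tensor and block-diagonal invertibility, which the paper leaves implicit.
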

	\begin{theorem}
		Let $\mathcal{A}\in \mathbb{R}^{n\times n\times n_3}$ be invertible, $\mathcal{E}\in \mathbb{R}^{n\times n\times n_3}$ and $\mathcal{B}=\mathcal{A}+\mathcal{E}$. If ${\left\|\mathcal{A}^{-1}\right\|}_2{\left\|\mathcal{E}\right\|}_2<1,$
	then
	\begin{equation}\label{j}
		{\left\|\mathcal{B}^{-1}\right\|}_F\le\frac{1}{\gamma_1}{\left\|\mathcal{A}^{-1}\right\|}_F,\ {\left\|\mathcal{B}^{-1}\right\|}_2\le\frac{1}{\gamma_2}{\left\|\mathcal{A}^{-1}\right\|}_2,
	\end{equation}
	and
	\begin{equation}\label{k}
		\frac{{\left\|\mathcal{B}^{-1}-\mathcal{A}^{-1}\right\|}_F}{{\left\|\mathcal{A}^{-1}\right\|}_F}\le\frac{\kappa_1}{\gamma_1}\frac{{\left\|\mathcal{E}\right\|}_2}{{\left\|\mathcal{A}\right\|}_F},\ \frac{{\left\|\mathcal{B}^{-1}-\mathcal{A}^{-1}\right\|}_2}{{\left\|\mathcal{A}^{-1}\right\|}_2}\le\frac{\kappa_2}{\gamma_2}\frac{{\left\|\mathcal{E}\right\|}_2}{{\left\|\mathcal{A}\right\|}_2},
	\end{equation}
	where $\kappa_1={\left\|\mathcal{A}\right\|}_F{\left\|\mathcal{A}^{-1}\right\|}_2$, $\gamma_1=1-\kappa_1\frac{{\left\|\mathcal{E}\right\|}_2}{{\left\|\mathcal{A}\right\|}_F}$, and
	$\kappa_2={\left\|\mathcal{A}\right\|}_2{\left\|\mathcal{A}^{-1}\right\|}_2$, $\gamma_2=1-\kappa_2\frac{{\left\|\mathcal{E}\right\|}_2}{{\left\|\mathcal{A}\right\|}_2}$.
	\end{theorem}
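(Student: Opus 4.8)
The plan is to reduce the whole statement to controlling the single invertible factor $\mathcal{I}+\mathcal{A}^{-1}*\mathcal{E}$ by means of Lemma 3.1. First I would write the factorization
\begin{equation*}
\mathcal{B}=\mathcal{A}+\mathcal{E}=\mathcal{A}*(\mathcal{I}+\mathcal{A}^{-1}*\mathcal{E}),
\qquad\text{so}\qquad
\mathcal{B}^{-1}=(\mathcal{I}+\mathcal{A}^{-1}*\mathcal{E})^{-1}*\mathcal{A}^{-1}.
\end{equation*}
By Theorem 2.1(b) we have $\|\mathcal{A}^{-1}*\mathcal{E}\|_2\le\|\mathcal{A}^{-1}\|_2\|\mathcal{E}\|_2<1$, so Lemma 3.2 already ensures $\mathcal{B}$ is invertible, and Lemma 3.1 (applied with $\mathcal{A}$ replaced by $-\mathcal{A}^{-1}*\mathcal{E}$) gives
\begin{equation*}
\big\|(\mathcal{I}+\mathcal{A}^{-1}*\mathcal{E})^{-1}\big\|_2
\le\frac{1}{1-\|\mathcal{A}^{-1}*\mathcal{E}\|_2}
\le\frac{1}{1-\|\mathcal{A}^{-1}\|_2\|\mathcal{E}\|_2}.
\end{equation*}
A key bookkeeping observation here is that $\gamma_1=\gamma_2=1-\|\mathcal{A}^{-1}\|_2\|\mathcal{E}\|_2$, since both $\kappa_1\|\mathcal{E}\|_2/\|\mathcal{A}\|_F$ and $\kappa_2\|\mathcal{E}\|_2/\|\mathcal{A}\|_2$ collapse to $\|\mathcal{A}^{-1}\|_2\|\mathcal{E}\|_2$; this is what lets one Neumann estimate feed both branches of the theorem.

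To obtain \eqref{j} I would multiply the factorization on the right by $\mathcal{A}^{-1}$ and estimate with the mixed inequality $\|\mathcal{C}*\mathcal{D}\|_F\le\|\mathcal{C}\|_2\|\mathcal{D}\|_F$ of Theorem 2.1(a):
\begin{equation*}
\|\mathcal{B}^{-1}\|_F
\le\big\|(\mathcal{I}+\mathcal{A}^{-1}*\mathcal{E})^{-1}\big\|_2\,\|\mathcal{A}^{-1}\|_F
\le\frac{1}{\gamma_1}\|\mathcal{A}^{-1}\|_F,
\end{equation*}
and the analogous chain with Theorem 2.1(b) (replacing the trailing $\|\cdot\|_F$ by $\|\cdot\|_2$) yields the spectral bound $\|\mathcal{B}^{-1}\|_2\le\gamma_2^{-1}\|\mathcal{A}^{-1}\|_2$.

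For \eqref{k} I would return to the resolvent identity \eqref{inversechange}, which with $\mathcal{A}-\mathcal{B}=-\mathcal{E}$ reads $\mathcal{B}^{-1}-\mathcal{A}^{-1}=-\mathcal{A}^{-1}*\mathcal{E}*\mathcal{B}^{-1}$. Bounding this triple product with Theorem 2.1 and substituting the just-proved spectral bound on $\|\mathcal{B}^{-1}\|_2$ gives
\begin{equation*}
\|\mathcal{B}^{-1}-\mathcal{A}^{-1}\|_F
\le\|\mathcal{A}^{-1}\|_F\,\|\mathcal{E}\|_2\,\|\mathcal{B}^{-1}\|_2
\le\frac{\|\mathcal{A}^{-1}\|_2\|\mathcal{E}\|_2}{\gamma_2}\,\|\mathcal{A}^{-1}\|_F .
\end{equation*}
Dividing by $\|\mathcal{A}^{-1}\|_F$ and rewriting $\|\mathcal{A}^{-1}\|_2\|\mathcal{E}\|_2$ as $\kappa_1\|\mathcal{E}\|_2/\|\mathcal{A}\|_F$ (using $\gamma_1=\gamma_2$) reproduces the claimed relative Frobenius bound, and the spectral relative bound follows identically through Theorem 2.1(b).

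The proof carries essentially no analytic difficulty; the only point demanding care is the choice of inequality from Theorem 2.1 at each step. Because the Frobenius norm is submultiplicative only up to a factor $\sqrt{n_3}$ (the second estimate in Theorem 2.1(b)), a careless estimate would leak a spurious dimensional constant into the bounds. The clean constants $\kappa_1/\gamma_1$ and $\kappa_2/\gamma_2$ are recovered precisely by always pairing exactly one spectral factor with one Frobenius factor via the mixed inequality Theorem 2.1(a), so this pairing is the step I would watch most closely.
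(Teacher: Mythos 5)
Your proposal is correct and follows essentially the same route as the paper: factor $\mathcal{B}^{-1}=(\mathcal{I}+\mathcal{A}^{-1}*\mathcal{E})^{-1}*\mathcal{A}^{-1}$, invoke Lemma 3.1 to get $\|(\mathcal{I}+\mathcal{A}^{-1}*\mathcal{E})^{-1}\|_2\le 1/\gamma_1$, and then bound the difference via the resolvent identity (\ref{inversechange}). The only (immaterial) deviation is in the last step, where you estimate $\|\mathcal{A}^{-1}*\mathcal{E}*\mathcal{B}^{-1}\|_F\le\|\mathcal{A}^{-1}\|_F\|\mathcal{E}\|_2\|\mathcal{B}^{-1}\|_2$ and use the spectral bound on $\mathcal{B}^{-1}$, whereas the paper pairs the norms as $\|\mathcal{A}^{-1}\|_2\|\mathcal{E}\|_2\|\mathcal{B}^{-1}\|_F$ and uses the Frobenius bound; since $\gamma_1=\gamma_2$, as you correctly note, both yield identical constants.
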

	\begin{proof}
From Lemma 3.2, we know that $\mathcal{B}$ is invertible. Hence,
	\begin{equation}\label{666}
		\begin{aligned}
			{\left\|\mathcal{B}^{-1}\right\|}_F&={\left\|(\mathcal{A}+\mathcal{E})^{-1}\right\|}_F
			={\left\|(\mathcal{I}+\mathcal{A}^{-1}*\mathcal{E})^{-1}*\mathcal{A}^{-1}\right\|}_F\\
			&\le{\left\|(\mathcal{I}+\mathcal{A}^{-1}*\mathcal{E})^{-1}\right\|}_2{\left\|\mathcal{A}^{-1}\right\|}_F.
		\end{aligned}
	\end{equation}
	In view of Lemma 3.1,
	\begin{equation}\label{g}
		{\left\|(\mathcal{I}+\mathcal{A}^{-1}*\mathcal{E})^{-1}\right\|}_2\le(1-{\left\|\mathcal{A}^{-1}*\mathcal{E}\right\|}_2)^{-1}
		\le\frac{1}{1-{\left\|\mathcal{A}^{-1}\right\|}_2{\left\|\mathcal{E}\right\|}_2}
		=\frac{1}{\gamma_1}.
	\end{equation}
	Substituting (\ref{g}) into right-hand side of (\ref{666}), the first inequality of (\ref{j}) follows.
 	Moreover,
	\begin{equation*}
			{\left\|\mathcal{B}^{-1}-\mathcal{A}^{-1}\right\|}_F\le{\left\|\mathcal{A}^{-1}\right\|}_2{\left\|\mathcal{A-B}\right\|}_2{\left\|\mathcal{B}^{-1}\right\|}_F\le \kappa_1\frac{{\left\|\mathcal{E}\right\|}_2}{{\left\|\mathcal{A}\right\|}_F}
	 		\frac{1}{\gamma_1}{\left\|\mathcal{A}^{-1}\right\|}_F,
	\end{equation*}
	which 
	leads to the first part of (\ref{k}). The spectral norm case is similar and omitted.\end{proof}
	
	\hskip 2em The next theorem 
	characterizes the perturbation for tensor equations.
	\begin{theorem}
		Suppose $\mathcal{A}\in \mathbb{R}^{n\times n\times n_3}$ is invertible, $\mathcal{M}=\mathcal{A}+\mathcal{E}$ and $\mathcal{X}$ is the solution to equation $\mathcal{A}*\mathcal{X}=\mathcal{B}$, satisfying 
			${\left\|\mathcal{A}^{-1}\right\|}_2{\left\|\mathcal{E}\right\|}_2<1.$
		Then $\mathcal{M}*\left(\mathcal{X+H}\right)=\mathcal{B+K}$ has a unique solution 
		and
		\begin{equation}\label{3.29}
			\frac{{\left\|\mathcal{H}\right\|}_F}{{\left\|\mathcal{X}\right\|}_F}\le\frac{\kappa_1}{\gamma_1}\left(\frac{{\left\|\mathcal{E}\right\|}_2}{{\left\|\mathcal{A}\right\|}_F}+\frac{\sqrt{n_3}{\left\|\mathcal{K}\right\|}_F}{{\left\|\mathcal{B}\right\|}_F}\right),\quad
			\frac{{\left\|\mathcal{H}\right\|}_2}{{\left\|\mathcal{X}\right\|}_2}\le\frac{\kappa_2}{\gamma_2}\left(\frac{{\left\|\mathcal{E}\right\|}_2}{{\left\|\mathcal{A}\right\|}_2}+\frac{{\left\|\mathcal{K}\right\|}_2}{{\left\|\mathcal{B}\right\|}_2}\right),
		\end{equation}
	where $\mathcal{\kappa}_1={\left\|\mathcal{A}^{-1}\right\|}_2{\left\|\mathcal{A}\right\|}_F$,
	$\gamma_1=1-\kappa_1\frac{{\left\|\mathcal{E}\right\|}_2}{{\left\|\mathcal{A}\right\|}_F}$, and $\mathcal{\kappa}_2={\left\|\mathcal{A}^{-1}\right\|}_2{\left\|\mathcal{A}\right\|}_2$,
	$\gamma_2=1-\kappa_2\frac{{\left\|\mathcal{E}\right\|}_2}{{\left\|\mathcal{A}\right\|}_2}$.
	\end{theorem}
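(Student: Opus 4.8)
The plan is to follow the classical linear-system perturbation argument, now carried out in the t-product algebra. First I would expand the perturbed equation $(\mathcal{A}+\mathcal{E})*(\mathcal{X}+\mathcal{H})=\mathcal{B}+\mathcal{K}$ and subtract the unperturbed relation $\mathcal{A}*\mathcal{X}=\mathcal{B}$. After cancellation and regrouping this leaves the identity $\mathcal{M}*\mathcal{H}=\mathcal{K}-\mathcal{E}*\mathcal{X}$. Since the hypothesis ${\left\|\mathcal{A}^{-1}\right\|}_2{\left\|\mathcal{E}\right\|}_2<1$ makes $\mathcal{M}=\mathcal{A}+\mathcal{E}$ invertible by Lemma 3.2, the perturbed system has the unique solution $\mathcal{X}+\mathcal{H}$, with the correction given explicitly by $\mathcal{H}=\mathcal{M}^{-1}*(\mathcal{K}-\mathcal{E}*\mathcal{X})$. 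This settles existence and uniqueness and provides the working formula for $\mathcal{H}$.

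Next I would bound $\mathcal{H}$ in norm. Writing $\mathcal{M}^{-1}=(\mathcal{I}+\mathcal{A}^{-1}*\mathcal{E})^{-1}*\mathcal{A}^{-1}$ as in the proof of Theorem 3.3, Lemma 3.1 gives ${\left\|\mathcal{M}^{-1}\right\|}_2\le\frac{1}{\gamma_1}{\left\|\mathcal{A}^{-1}\right\|}_2$; here one notes that $\kappa_1\frac{{\left\|\mathcal{E}\right\|}_2}{{\left\|\mathcal{A}\right\|}_F}={\left\|\mathcal{A}^{-1}\right\|}_2{\left\|\mathcal{E}\right\|}_2=\kappa_2\frac{{\left\|\mathcal{E}\right\|}_2}{{\left\|\mathcal{A}\right\|}_2}$, so in fact $\gamma_1=\gamma_2=1-{\left\|\mathcal{A}^{-1}\right\|}_2{\left\|\mathcal{E}\right\|}_2>0$. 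Taking Frobenius norms in $\mathcal{H}=\mathcal{M}^{-1}*(\mathcal{K}-\mathcal{E}*\mathcal{X})$ and applying the mixed inequality ${\left\|\mathcal{M}^{-1}*\mathcal{C}\right\|}_F\le{\left\|\mathcal{M}^{-1}\right\|}_2{\left\|\mathcal{C}\right\|}_F$ of Theorem 2.1(a), together with the triangle inequality and ${\left\|\mathcal{E}*\mathcal{X}\right\|}_F\le{\left\|\mathcal{E}\right\|}_2{\left\|\mathcal{X}\right\|}_F$, I would reach ${\left\|\mathcal{H}\right\|}_F\le\frac{1}{\gamma_1}{\left\|\mathcal{A}^{-1}\right\|}_2\bigl({\left\|\mathcal{K}\right\|}_F+{\left\|\mathcal{E}\right\|}_2{\left\|\mathcal{X}\right\|}_F\bigr)$.

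The delicate step is passing to the relative form in (\ref{3.29}). Dividing by ${\left\|\mathcal{X}\right\|}_F$ leaves a factor $1/{\left\|\mathcal{X}\right\|}_F$ multiplying ${\left\|\mathcal{K}\right\|}_F$, which I would convert using $\mathcal{A}*\mathcal{X}=\mathcal{B}$ and the bound ${\left\|\mathcal{B}\right\|}_F={\left\|\mathcal{A}*\mathcal{X}\right\|}_F\le\sqrt{n_3}{\left\|\mathcal{A}\right\|}_F{\left\|\mathcal{X}\right\|}_F$ of Theorem 2.1(b), i.e. $\frac{1}{{\left\|\mathcal{X}\right\|}_F}\le\frac{\sqrt{n_3}{\left\|\mathcal{A}\right\|}_F}{{\left\|\mathcal{B}\right\|}_F}$. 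Substituting this and collecting the common factor $\kappa_1={\left\|\mathcal{A}^{-1}\right\|}_2{\left\|\mathcal{A}\right\|}_F$ yields exactly the first inequality of (\ref{3.29}). This is the only genuinely subtle point, since it is here that the $\sqrt{n_3}$ appears and the Frobenius bound departs from the spectral one.

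Finally, the spectral-norm estimate follows by the identical chain, now using the purely submultiplicative inequalities ${\left\|\mathcal{M}^{-1}*\mathcal{C}\right\|}_2\le{\left\|\mathcal{M}^{-1}\right\|}_2{\left\|\mathcal{C}\right\|}_2$ and ${\left\|\mathcal{B}\right\|}_2\le{\left\|\mathcal{A}\right\|}_2{\left\|\mathcal{X}\right\|}_2$ from Theorem 2.1(b)--(c). Because the spectral submultiplicativity carries no $\sqrt{n_3}$, dividing by ${\left\|\mathcal{X}\right\|}_2$, using $\frac{1}{{\left\|\mathcal{X}\right\|}_2}\le\frac{{\left\|\mathcal{A}\right\|}_2}{{\left\|\mathcal{B}\right\|}_2}$, and regrouping $\kappa_2={\left\|\mathcal{A}^{-1}\right\|}_2{\left\|\mathcal{A}\right\|}_2$ produces the clean second inequality of (\ref{3.29}). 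Beyond careful bookkeeping of which mixed norm inequality is invoked at each step, I anticipate no real obstacle.
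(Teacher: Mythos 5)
Your proposal is correct and arrives at both inequalities of (\ref{3.29}) with the right constants, but it organizes the core estimate differently from the paper. The paper never inverts the perturbed tensor $\mathcal{M}$: it rewrites the perturbed equation as $\mathcal{A}*\mathcal{H}=-\mathcal{E}*\mathcal{X}+\mathcal{K}-\mathcal{E}*\mathcal{H}$, applies $\mathcal{A}^{-1}$, takes norms so that $\|\mathcal{H}\|_F$ appears on both sides, and then absorbs the term $\|\mathcal{A}^{-1}\|_2\|\mathcal{E}\|_2\|\mathcal{H}\|_F$ into the left-hand side using the hypothesis $\|\mathcal{A}^{-1}\|_2\|\mathcal{E}\|_2<1$; this scalar ``solve for $\|\mathcal{H}\|_F$'' step plays the role of a Neumann-series bound without ever citing Lemma 3.1. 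You instead solve explicitly $\mathcal{H}=\mathcal{M}^{-1}*(\mathcal{K}-\mathcal{E}*\mathcal{X})$, factor $\mathcal{M}=\mathcal{A}*(\mathcal{I}+\mathcal{A}^{-1}*\mathcal{E})$, and invoke Lemma 3.1 to obtain $\|\mathcal{M}^{-1}\|_2\le\|\mathcal{A}^{-1}\|_2/\gamma_1$, in effect reusing the inverse-perturbation machinery of Theorem 3.3. The two routes are mathematically equivalent (the absorption trick and Lemma 3.1 encode the same geometric-series inequality), so the difference is one of economy versus modularity: the paper's proof needs only Lemma 3.2 beyond the norm properties, while yours isolates a reusable bound on $\|\mathcal{M}^{-1}\|_2$ and makes the structure of the classical matrix argument more visible. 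Your observation that $\kappa_1\|\mathcal{E}\|_2/\|\mathcal{A}\|_F=\|\mathcal{A}^{-1}\|_2\|\mathcal{E}\|_2=\kappa_2\|\mathcal{E}\|_2/\|\mathcal{A}\|_2$, hence $\gamma_1=\gamma_2>0$, is correct and clarifies something the paper leaves implicit. The final conversion steps, via $\|\mathcal{B}\|_F\le\sqrt{n_3}\|\mathcal{A}\|_F\|\mathcal{X}\|_F$ for the Frobenius bound and $\|\mathcal{B}\|_2\le\|\mathcal{A}\|_2\|\mathcal{X}\|_2$ for the spectral bound, coincide exactly with the paper's.
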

	\begin{proof}
		From Lemma 3.2, we know that $\mathcal{M}$ is invertible, so $\mathcal{M}*\left(\mathcal{X+H}\right)=\mathcal{B+K}$ has a unique solution. Next, we only prove the first inequality of (\ref{3.29}). 
		Rewrite $\mathcal{A+E}*\mathcal{X+H}=\mathcal{B+K}$ as $\mathcal{A}*\mathcal{H}=-\mathcal{E}*\mathcal{X}+\mathcal{K}-\mathcal{E}*\mathcal{H}$,
	which yields $\mathcal{H}=-\mathcal{A}^{-1}*\mathcal{E}*\mathcal{X}+\mathcal{A}^{-1}*\mathcal{K}-\mathcal{A}^{-1}*\mathcal{E}*\mathcal{H}.$
	Therefore,
		\begin{equation*}\label{3.33}
			{\left\|\mathcal{H}\right\|}_F\le{\left\|\mathcal{A}^{-1}\right\|}_2{\left\|\mathcal{E}\right\|}_2{\left\|\mathcal{X}\right\|}_F+{\left\|\mathcal{A}^{-1}\right\|}_2{\left\|\mathcal{K}\right\|}_F+{\left\|\mathcal{A}^{-1}\right\|}_2{\left\|\mathcal{E}\right\|}_2{\left\|\mathcal{H}\right\|}_F,
		\end{equation*}
	which, together with $\|\mathcal{A}^{-1}\|_2\|\mathcal{E}\|_2<1$, gives
	\begin{equation*}\label{3.35}
		{\left\|\mathcal{H}\right\|}_F\le\frac{{\left\|\mathcal{A}^{-1}\right\|}_2{\left\|\mathcal{A}\right\|}_F}{1-{\left\|\mathcal{A}^{-1}\right\|}_2{\left\|\mathcal{E}\right\|}_2}\left(\frac{{\left\|\mathcal{E}\right\|}_2{\left\|\mathcal{X}\right\|}_F}{{\left\|\mathcal{A}\right\|}_F}+\frac{{\left\|\mathcal{K}\right\|}_F}{{\left\|\mathcal{A}\right\|}_F}\right).
	\end{equation*}
	 We arrive at (\ref{3.29}) using  ${\left\|\mathcal{B}\right\|}_F={\left\|\mathcal{A}*\mathcal{X}\right\|}_F\le\sqrt{n_3}{\left\|\mathcal{A}\right\|}_F{\left\|\mathcal{X}\right\|}_F$. 
	\end{proof}
	
	\hskip 2em 
	Note that $\kappa_i$'s 
	can be  treated as condition numbers when interpreting the sensitivity of tensor equations 
	to small perturbations on $\mathcal{A}$ and $\mathcal{B}$.

\hskip 2em
Some of the results and proofs above are derived through the identity (\ref{inversechange}), which shows how the inverse changes if the tensor changes. For matrices, a rank-$k$ perturbation to a nonsingular matrix results in a rank-$k$ correction of the inverse. Specifically, the SMW formula gives a convenient expression for the inverse of the matrix $A+UBV$ where $A \in \mathbb{R}^{n\times n},~U \in \mathbb{R}^{n\times k}$ and $V\in \mathbb{R}^{k\times n}$:
\begin{equation}\label{SMW_M}
\left(A+UBV\right)^{-1}=A^{-1}-A^{-1}U\left(B^{-1}+VA^{-1}U\right)^{-1}VA^{-1},
\end{equation}
in which we assume that  $A$, $B$ and $B^{-1}+VA^{-1}U$ are all nonsingular.
In what follows, we prove that a result similar to (\ref{SMW_M}) also works well on characterizing the inverse of a tensor after structured perturbations.
\begin{theorem}(SMW formula for invertible tensors)
	Suppose that $\mathcal{A}\in \mathbb{C}^{n_1\times n_1 \times n_3}$ and $\mathcal{B}\in \mathbb{C}^{n_2\times n_2 \times n_3}$ are invertible. Given  $\mathcal{U}\in \mathbb{C}^{n_1\times n_2 \times n_3}$, $\mathcal{V}\in \mathbb{C}^{n_2\times n_1 \times n_3}$. If $\mathcal{B}^{-1}+\mathcal{V}*\mathcal{A}^{-1}*\mathcal{U}$ is invertible,
then we have
	\begin{equation*}
		(\mathcal{A}+\mathcal{U}*\mathcal{B}*\mathcal{V})^{-1}=\mathcal{A}^{-1}-\mathcal{A}^{-1}*\mathcal{U}*(\mathcal{B}^{-1}+\mathcal{V}*\mathcal{A}^{-1}*\mathcal{U})^{-1}*\mathcal{V}*\mathcal{A}^{-1}.
	\end{equation*}
\end{theorem}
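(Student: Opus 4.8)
The plan is to verify the proposed formula \emph{directly}, mirroring the classical matrix proof of (\ref{SMW_M}); this is legitimate because the t-product is associative and distributive (both inherited from ordinary matrix multiplication through the \texttt{bcirc} representation) and the identity tensor $\mathcal{I}$ plays exactly the role of the matrix identity. I would write $\mathcal{W}=\mathcal{A}+\mathcal{U}*\mathcal{B}*\mathcal{V}$, abbreviate the (assumed invertible) inner tensor by $\mathcal{S}=\mathcal{B}^{-1}+\mathcal{V}*\mathcal{A}^{-1}*\mathcal{U}$, and denote the claimed inverse by $\mathcal{Y}=\mathcal{A}^{-1}-\mathcal{A}^{-1}*\mathcal{U}*\mathcal{S}^{-1}*\mathcal{V}*\mathcal{A}^{-1}$. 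The goal is then to compute $\mathcal{W}*\mathcal{Y}$ and show it collapses to $\mathcal{I}$.

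Expanding $\mathcal{W}*\mathcal{Y}$ by distributivity and using $\mathcal{A}*\mathcal{A}^{-1}=\mathcal{I}$ would yield
\begin{equation*}
\mathcal{W}*\mathcal{Y}=\mathcal{I}+\mathcal{U}*\mathcal{B}*\mathcal{V}*\mathcal{A}^{-1}-\left(\mathcal{U}+\mathcal{U}*\mathcal{B}*\mathcal{V}*\mathcal{A}^{-1}*\mathcal{U}\right)*\mathcal{S}^{-1}*\mathcal{V}*\mathcal{A}^{-1}.
\end{equation*}
The crux is to factor the parenthesised middle term on the correct side, namely
\begin{equation*}
\mathcal{U}+\mathcal{U}*\mathcal{B}*\mathcal{V}*\mathcal{A}^{-1}*\mathcal{U}=\mathcal{U}*\mathcal{B}*\left(\mathcal{B}^{-1}+\mathcal{V}*\mathcal{A}^{-1}*\mathcal{U}\right)=\mathcal{U}*\mathcal{B}*\mathcal{S},
\end{equation*}
which relies on $\mathcal{U}*\mathcal{B}*\mathcal{B}^{-1}=\mathcal{U}$. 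Substituting this and cancelling $\mathcal{S}*\mathcal{S}^{-1}=\mathcal{I}$ reduces the last two terms to $-\mathcal{U}*\mathcal{B}*\mathcal{V}*\mathcal{A}^{-1}$, leaving $\mathcal{W}*\mathcal{Y}=\mathcal{I}$, exactly as in the matrix case.

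I expect the main point requiring care — rather than a deep obstacle — to be the \emph{non-commutativity} of the t-product: the factoring above must keep $\mathcal{U}*\mathcal{B}$ on the left and $\mathcal{S}$ on the right, and every cancellation ($\mathcal{A}*\mathcal{A}^{-1}$, $\mathcal{B}*\mathcal{B}^{-1}$, $\mathcal{S}*\mathcal{S}^{-1}$) must respect the order of factors. A secondary point is upgrading this one-sided identity to a genuine inverse: since $\mathcal{W}$ is a square $n_1\times n_1\times n_3$ tensor, $\texttt{bcirc}(\mathcal{W})$ is a square matrix, so a right t-inverse is automatically two-sided; alternatively one repeats the symmetric computation to confirm $\mathcal{Y}*\mathcal{W}=\mathcal{I}$, which simultaneously certifies that $\mathcal{W}$ is invertible, justifying the notation $\mathcal{W}^{-1}$ used in the statement.

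An equivalent and arguably shorter route, well suited to the paper's toolkit, is to pass to the Fourier domain via (\ref{DFT}). Under block diagonalisation the t-product becomes ordinary block-diagonal matrix multiplication and tensor inversion becomes slicewise inversion, so the hypotheses reduce to invertibility of $\bar{A}^{(i)}$, $\bar{B}^{(i)}$ and $(\bar{B}^{(i)})^{-1}+\bar{V}^{(i)}(\bar{A}^{(i)})^{-1}\bar{U}^{(i)}$ for each $i$; the classical matrix formula (\ref{SMW_M}) then applies to each frontal slice, and applying the inverse transform reassembles the tensor identity.
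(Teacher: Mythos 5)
Your proposal is correct, but your primary argument takes a genuinely different route from the paper's. The paper proves the theorem in one stroke through the Fourier correspondence: it applies (\ref{DFT}) to $\mathcal{A},\mathcal{U},\mathcal{B},\mathcal{V}$, notes that the hypotheses make the block-diagonal matrices $\bar{A}$, $\bar{B}$ and $\bar{B}^{-1}+\bar{V}\bar{A}^{-1}\bar{U}$ invertible, invokes the classical matrix identity (\ref{SMW_M}) for $(\bar{A}+\bar{U}\bar{B}\bar{V})^{-1}$, and transforms back --- exactly the ``shorter route'' you sketch in your final paragraph. Your main route instead verifies the formula inside the t-product algebra: expanding $\mathcal{W}*\mathcal{Y}$, factoring $\mathcal{U}+\mathcal{U}*\mathcal{B}*\mathcal{V}*\mathcal{A}^{-1}*\mathcal{U}=\mathcal{U}*\mathcal{B}*\mathcal{S}$ via $\mathcal{U}*\mathcal{B}*\mathcal{B}^{-1}=\mathcal{U}$, and cancelling $\mathcal{S}*\mathcal{S}^{-1}$; the computation is right, and the algebraic laws it relies on (associativity, distributivity, two-sided identity $\mathcal{I}$) do hold for the t-product because $\texttt{bcirc}$ is an injective multiplicative map into block-circulant matrices with $\texttt{bcirc}(\mathcal{I})=I$. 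Your attention to two-sidedness is also well placed: the statement tacitly asserts that $\mathcal{A}+\mathcal{U}*\mathcal{B}*\mathcal{V}$ is invertible, which your symmetric check $\mathcal{Y}*\mathcal{W}=\mathcal{I}$ (or the square-$\texttt{bcirc}$ argument) certifies explicitly, whereas the paper inherits it silently from the matrix result. As for what each approach buys: yours is self-contained, makes no use of the DFT machinery or of the matrix SMW formula as a black box, and works verbatim in any associative algebra with identity; the paper's is a three-line reduction that leverages existing matrix theory and is the same device it uses systematically for all of its perturbation results (Theorems 4.1--4.3 are proved the same way), so it keeps the exposition uniform.
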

\begin{proof}
Taking FFT along each tubal scalar of $\mathcal{A}$, $\mathcal{U}$, $\mathcal{B}$ and $\mathcal{V}$ respectively, we obtain the corresponding $\bar{A}$, $\bar{U}$, $\bar{B}$ and $\bar{V}$. The assumption ensures the invertibility of $\bar{A}$, $\bar{B}$ and $\bar{B}^{-1}+\bar{V}\bar{A}^{-1}\bar{U}$. The result holds as
	\begin{equation*}
		(\bar{A}+\bar{U}\bar{B}\bar{V})^{-1}=\bar{A}^{-1}-\bar{A}^{-1}\bar{U}(\bar{B}^{-1}+\bar{V}\bar{A}^{-1}\bar{U})^{-1}\bar{V}\bar{A}^{-1},
	\end{equation*}
	which is a direct application of (\ref{SMW_M}).
\end{proof}
	\section{Perturbation for tensor Moore-Penrose inverses and least squares problem}
	\hskip 2em In this section, based on the Moore-Penrose inverse proposed by Jin et al. in \cite{Jin2017}, we will 
	bound the perturbation of the tensor Moore-Penrose inverse and generalize a classical perturbation theorem for the tensor least squares problem.
We first recall an important property of $\mathcal{A}^{\dagger}$.
	Suppose that
	\begin{equation*}
		(F_{n_3}\otimes I_{n_1})\cdot\texttt{bcirc}(\mathcal{A}^{\dagger})\cdot(F_{n_3}^{-1}\otimes I_{n_2})=\bar{B}= \texttt{diag}(\bar{B}^{(1)},\bar{B}^{(2)},\cdots\bar{B}^{(n_3)}),
	\end{equation*}
	\begin{equation*}\label{3.36}
		(F_{n_3}\otimes I_{n_1})\cdot\texttt{bcirc}(\mathcal{A})\cdot(F_{n_3}^{-1}\otimes I_{n_2})=\bar{A}=\texttt{diag}(\bar{A}^{(1)},\bar{A}^{(2)},\cdots,\bar{A}^{(n_3)}),
	\end{equation*}
	then $\left(\bar{A}\right)^{\dagger}=\bar{B},\  \bar{B}^{(i)}=\left({\bar{A}^{(i)}}\right)^{\dagger}.$
The subsequent theorem gives upper bounds on the perturbation for the t-product based  Moore-Penrose inverse under the
two norms.
	\begin{theorem}
		Let $\mathcal{A}\in \mathbb{R}^{n_1\times n_2\times n_3}$, $\mathcal{B}=\mathcal{A+E}$, then
		\begin{equation*}\label{3.39}
			{\left\|\mathcal{B}^{\dagger}-\mathcal{A}^{\dagger}\right\|}_F\le\sqrt{2}\mathrm{max}\left\{{\left\|\mathcal{A}^{\dagger}\right\|}_2^2,{\left\|\mathcal{B}^{\dagger}\right\|}_2^2\right\}{\|\mathcal{E}\|}_F,
		\end{equation*}
		\begin{equation*}\label{3.40}
			{\left\|\mathcal{B}^{\dagger}-\mathcal{A}^{\dagger}\right\|}_2\le\frac{1+\sqrt{5}}{2} \mathrm{max}\left\{{\left\|\mathcal{A}^{\dagger}\right\|}_2^2,{\left\|\mathcal{B}^{\dagger}\right\|}_2^2\right\}{\|\mathcal{E}\|}_2.
		\end{equation*}
	\end{theorem}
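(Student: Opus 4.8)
The plan is to reduce both inequalities to the corresponding matrix perturbation bounds for the Moore--Penrose inverse, exploiting the block-diagonalization by the DFT. The essential tool is the identity recalled just before the statement: if $\bar{A}$ denotes the block-diagonal matrix obtained from $\mathcal{A}$ via (\ref{DFT}), then $(\bar{A})^{\dagger}=\overline{\mathcal{A}^{\dagger}}$, i.e. forming the Moore--Penrose inverse commutes with the Fourier block-diagonalization. Consequently, writing $\bar{B}=\bar{A}+\bar{E}$, we have $\overline{\mathcal{A}^{\dagger}}=(\bar{A})^{\dagger}$ and $\overline{\mathcal{B}^{\dagger}}=(\bar{B})^{\dagger}$, so that every tensor appearing in the statement corresponds, under the transform, to an ordinary matrix together with its pseudoinverse. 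Thus the whole problem is transplanted to a single matrix pair $\bar{A},\bar{B}=\bar{A}+\bar{E}$.

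First I would translate all the quantities using Lemma 2.1. For the spectral norm this gives $\|\mathcal{B}^{\dagger}-\mathcal{A}^{\dagger}\|_2=\|(\bar{B})^{\dagger}-(\bar{A})^{\dagger}\|_2$, $\|\mathcal{A}^{\dagger}\|_2=\|(\bar{A})^{\dagger}\|_2$, $\|\mathcal{B}^{\dagger}\|_2=\|(\bar{B})^{\dagger}\|_2$ and $\|\mathcal{E}\|_2=\|\bar{E}\|_2$; for the Frobenius norm each tensor norm equals $n_3^{-1/2}$ times the Frobenius norm of the associated matrix. Then I would invoke the classical Wedin-type perturbation bounds for matrices, namely
\[
\|(\bar{B})^{\dagger}-(\bar{A})^{\dagger}\|_F\le\sqrt{2}\,\max\{\|(\bar{A})^{\dagger}\|_2^2,\|(\bar{B})^{\dagger}\|_2^2\}\,\|\bar{E}\|_F,
\]
\[
\|(\bar{B})^{\dagger}-(\bar{A})^{\dagger}\|_2\le\tfrac{1+\sqrt{5}}{2}\,\max\{\|(\bar{A})^{\dagger}\|_2^2,\|(\bar{B})^{\dagger}\|_2^2\}\,\|\bar{E}\|_2,
\]
applied to the block-diagonal matrices $\bar{A}$ and $\bar{B}$. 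Substituting the norm identities of the previous step finishes both bounds; in the Frobenius case the two factors $n_3^{-1/2}$, one on the left and one inside $\|\bar{E}\|_F$, cancel exactly, while in the spectral case there is no such factor. This is precisely why the stated constants $\sqrt{2}$ and $\tfrac{1+\sqrt{5}}{2}$ survive without any $n_3$-dependence.

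The main obstacle is the matrix-level estimate rather than the transform itself. The safe route is to cite the Wedin bound $\|B^{\dagger}-A^{\dagger}\|\le\mu\,\|A^{\dagger}\|_2\|B^{\dagger}\|_2\|E\|$, valid for arbitrary rank relationship between $A$ and $B$, with $\mu=\sqrt{2}$ in the Frobenius norm and $\mu=\tfrac{1+\sqrt{5}}{2}$ in the spectral norm, and then weaken $\|A^{\dagger}\|_2\|B^{\dagger}\|_2\le\max\{\|A^{\dagger}\|_2^2,\|B^{\dagger}\|_2^2\}$. If a self-contained derivation is wanted instead, I would establish the t-product analogue of Wedin's decomposition
\[
\mathcal{B}^{\dagger}-\mathcal{A}^{\dagger}=-\mathcal{B}^{\dagger}*\mathcal{E}*\mathcal{A}^{\dagger}+\mathcal{B}^{\dagger}*(\mathcal{B}^{\dagger})^{\mathrm{T}}*\mathcal{E}^{\mathrm{T}}*(\mathcal{I}-\mathcal{A}*\mathcal{A}^{\dagger})+(\mathcal{I}-\mathcal{B}^{\dagger}*\mathcal{B})*\mathcal{E}^{\mathrm{T}}*(\mathcal{A}^{\dagger})^{\mathrm{T}}*\mathcal{A}^{\dagger},
\]
bound each of the three summands by Theorem 2.1, and exploit the mutual orthogonality induced by the complementary projectors $\mathcal{I}-\mathcal{A}*\mathcal{A}^{\dagger}$ and $\mathcal{I}-\mathcal{B}^{\dagger}*\mathcal{B}$. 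It is this orthogonality that collapses the naive sum of three terms into the single factor $\sqrt{2}$ (respectively the golden ratio), and extracting the sharp constant from it would be the delicate part; the Fourier-reduction route avoids this by borrowing the constants wholesale from the known matrix theory.
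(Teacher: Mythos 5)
Your main argument (the first two paragraphs) is correct and is essentially the paper's own proof: block-diagonalize by the DFT, use the commutation $(\bar{A})^{\dagger}=\overline{\mathcal{A}^{\dagger}}$, convert norms by Lemma 2.1 so that the two factors $n_3^{-1/2}$ cancel in the Frobenius case, and quote the classical matrix bounds in the $\max$ form (the paper cites them as Theorem 2.9 of Sun's book).

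One correction to your third paragraph, however. The product-form bound
\begin{equation*}
\left\|B^{\dagger}-A^{\dagger}\right\|\le\mu\,\left\|A^{\dagger}\right\|_2\left\|B^{\dagger}\right\|_2\left\|E\right\|
\end{equation*}
is \emph{not} valid for an arbitrary rank relationship between $A$ and $B$; it requires $\mathrm{rank}(A)=\mathrm{rank}(B)$. Counterexample: $A=\mathrm{diag}(1,0)$, $B=\mathrm{diag}(1,\epsilon)$, so $E=\mathrm{diag}(0,\epsilon)$; then $\|B^{\dagger}-A^{\dagger}\|_2=\epsilon^{-1}$ while $\|A^{\dagger}\|_2\|B^{\dagger}\|_2\|E\|_2=1$, so no fixed constant $\mu$ can work. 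Only the $\max$ form survives rank jumps, which is exactly why the present theorem, which carries no rank hypothesis, is stated with $\max\{\|\mathcal{A}^{\dagger}\|_2^2,\|\mathcal{B}^{\dagger}\|_2^2\}$, and why the paper invokes the product form only later, in Theorem 4.2, under the hypothesis $\mathrm{multirank}(\mathcal{A})=\mathrm{multirank}(\mathcal{B})$ (Sun's Theorem 2.10). So your proposed ``safe route'' (product bound for arbitrary ranks, then weaken the product to the max) rests on a false premise and is precisely the unsafe one; fortunately this does not damage your proof, because the $\max$-form inequalities you actually applied in the second paragraph are the correct classical statements of Wedin's theorem and already finish the argument. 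Your alternative self-contained route via the three-term decomposition is algebraically sound (the identity transfers verbatim to the t-product algebra), but, as you anticipate, recovering the constants $\sqrt{2}$ and $(1+\sqrt{5})/2$ from it is the hard part, so the Fourier reduction is the right choice.
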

	\begin{proof}
		Apply (\ref{DFT}) to $\mathcal{A}^{\dagger},\mathcal{B}^{\dagger},\mathcal{A},\mathcal{B}$ and $\mathcal{E}$ respectively,
        and then we get $\bar{C}=\bar{A}^{\dagger},\bar{D}=\bar{B}^{\dagger},\bar{A},\bar{B}$ and $\bar{E}$ correspondingly.
	By \cite[Theorem 2.9]{Sun2001}, we deduce that
	\begin{equation*}
		\begin{aligned}		{\left\|\mathcal{B}^{\dagger}-\mathcal{A}^{\dagger}\right\|}_F&=\frac{1}{\sqrt{n_3}}{\left\|\bar{C}-\bar{D}\right\|}_F\le\frac{1}{\sqrt{n_3}}\sqrt{2}\mathrm{max}\left\{{\left\|\bar{D}\right\|}_2^2,{\left\|\bar{C}\right\|}_2^2\right\}{\left\|\bar{E}\right\|}_F\\&
			=\sqrt{2}\mathrm{max}\left\{{\left\|\mathcal{A}^{\dagger}\right\|}_2^2,{\left\|\mathcal{B}^{\dagger}\right\|}_2^2\right\}{\|\mathcal{E}\|}_F.
		\end{aligned}
	\end{equation*}
Similar manipulation gives the spectral norm inequality.
\end{proof}
	\hskip 2em 
	By repeated applications of the relationship between $\mathcal{A}^{\dagger}$ and $\bar{A}^{\dagger}$, we get a rank-preserving perturbation result.
%
	\begin{theorem}
		Let $\mathcal{A}\in \mathbb{R}^{n_1\times n_2\times n_3}$, $\mathcal{B}=\mathcal{A+E}$, $\mathrm{multirank}(\mathcal{A})= \mathrm{multirank}(\mathcal{B})=[r_1,r_2,\ldots,r_{n_3}]
	$, then
	\begin{equation}\label{3.45}
		{\left\|\mathcal{B}^{\dagger}-\mathcal{A}^{\dagger}\right\|}_F\le\mu{\left\|\mathcal{A}^{\dagger}\right\|}_2{\left\|\mathcal{B}^{\dagger}\right\|}_2{\left\|\mathcal{E}\right\|}_F,\
		{\left\|\mathcal{B}^{\dagger}-\mathcal{A}^{\dagger}\right\|}_2\le\lambda{\left\|\mathcal{A}^{\dagger}\right\|}_2{\left\|\mathcal{B}^{\dagger}\right\|}_2{\left\|\mathcal{E}\right\|}_2,
	\end{equation}
		where $\mu$ and $\lambda$ are given blow:
	\begin{equation*}\label{3.47}
		\begin{cases}
			\mu=\sqrt{2},\  \lambda=\frac{1+\sqrt{5}}{2},&\sum_{i=1}^{n_3}r_i<min\left\{n_3n_1,n_3n_2\right\}\\
			\mu=1,\ \lambda=\sqrt{2},&\sum_{i=1}^{n_3}r_i=min\left\{n_3n_1,n_3n_2\right\},n_1\neq n_2\\
			\mu=1,\ \lambda=1,&\sum_{i=1}^{n_3}r_i=n_1n_3=n_2n_3.
		\end{cases}
	\end{equation*}
	\end{theorem}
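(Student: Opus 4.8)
The plan is to pass to the Fourier domain, where the tensor Moore-Penrose inverse becomes the ordinary pseudoinverse of a block-diagonal matrix, and then invoke the classical rank-preserving perturbation theorem for matrix Moore-Penrose inverses. First I would apply the block diagonalization (\ref{DFT}) to $\mathcal{A}$, $\mathcal{B}=\mathcal{A}+\mathcal{E}$ and $\mathcal{E}$, obtaining the $n_1n_3\times n_2n_3$ matrices $\bar{A}=\texttt{diag}(\bar{A}^{(1)},\ldots,\bar{A}^{(n_3)})$, $\bar{B}$ and $\bar{E}$ with $\bar{B}=\bar{A}+\bar{E}$. By the relation recalled at the beginning of this section, the Fourier image of $\mathcal{A}^{\dagger}$ is exactly $(\bar{A})^{\dagger}=\texttt{diag}((\bar{A}^{(1)})^{\dagger},\ldots,(\bar{A}^{(n_3)})^{\dagger})$, and likewise the Fourier image of $\mathcal{B}^{\dagger}$ is $(\bar{B})^{\dagger}$. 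The multirank hypothesis forces $\mathrm{rank}(\bar{A}^{(i)})=\mathrm{rank}(\bar{B}^{(i)})=r_i$ for every $i$, so, since both matrices are block-diagonal, $\mathrm{rank}(\bar{A})=\mathrm{rank}(\bar{B})=\sum_{i=1}^{n_3}r_i$.

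Next I would read the three cases of the theorem as statements about the single matrix $\bar{A}$, which has $n_1n_3$ rows and $n_2n_3$ columns. The condition $\sum_i r_i<\min\{n_1n_3,n_2n_3\}$ means $\bar{A}$ is deficient in rank; $\sum_i r_i=\min\{n_1n_3,n_2n_3\}$ with $n_1\neq n_2$ means $\bar{A}$ has full rank but is rectangular; and $\sum_i r_i=n_1n_3=n_2n_3$ means $\bar{A}$ is nonsingular. Applying the rank-preserving perturbation bound for the matrix pseudoinverse (Wedin's theorem, cf.\ \cite{Sun2001}) to $\bar{A}$ and $\bar{B}$ then gives
\[
\|(\bar{B})^{\dagger}-(\bar{A})^{\dagger}\|_F\le\mu\,\|(\bar{A})^{\dagger}\|_2\,\|(\bar{B})^{\dagger}\|_2\,\|\bar{E}\|_F,\qquad
\|(\bar{B})^{\dagger}-(\bar{A})^{\dagger}\|_2\le\lambda\,\|(\bar{A})^{\dagger}\|_2\,\|(\bar{B})^{\dagger}\|_2\,\|\bar{E}\|_2,
\]
with $(\mu,\lambda)$ equal to $(\sqrt{2},(1+\sqrt{5})/2)$, $(1,\sqrt{2})$ and $(1,1)$ in the three cases, respectively.

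Finally I would descend back to tensor norms by Lemma 2.1. Since $\|\cdot\|_2$ is invariant under the block diagonalization, the spectral-norm bound transfers verbatim and yields the second inequality in (\ref{3.45}). For the Frobenius norm, using $\|\bar{E}\|_F=\sqrt{n_3}\|\mathcal{E}\|_F$ and $\|(\bar{B})^{\dagger}-(\bar{A})^{\dagger}\|_F=\sqrt{n_3}\|\mathcal{B}^{\dagger}-\mathcal{A}^{\dagger}\|_F$, the two factors of $\sqrt{n_3}$ cancel and the first inequality in (\ref{3.45}) follows. I expect the main obstacle to be not any analytic estimate but the bookkeeping of the case distinction: one must verify that the tensor-level trichotomy on $\sum_i r_i$ against $\min\{n_1n_3,n_2n_3\}$ coincides exactly with the rank-deficient / full-rank-rectangular / nonsingular trichotomy for $\bar{A}$ that selects the three pairs $(\mu,\lambda)$ in the matrix theorem.
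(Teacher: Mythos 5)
Your proposal is correct and follows essentially the same route as the paper: pass to the Fourier domain via (\ref{DFT}), use the identity $\overline{(\mathcal{A}^{\dagger})}=(\bar{A})^{\dagger}$, apply the classical rank-preserving matrix perturbation theorem of \cite[Theorem 2.10]{Sun2001} to the block-diagonal matrices with $\mathrm{rank}(\bar{A})=\mathrm{rank}(\bar{B})=\sum_{i=1}^{n_3}r_i$, and return to tensor norms by Lemma 2.1, where the factors of $\sqrt{n_3}$ cancel in the Frobenius case. Your explicit check that the tensor-level trichotomy on $\sum_i r_i$ matches the rank-deficient / full-rank-rectangular / nonsingular trichotomy for $\bar{A}$ is exactly the point the paper compresses into its closing remark.
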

	\begin{proof}
	We only consider the first inequality of (\ref{3.45}). From \cite[Theorem 2.10]{Sun2001},
	\begin{equation*}
		\begin{aligned}
			{\left\|\mathcal{B}^{\dag}-\mathcal{A}^{\dag}\right\|}_F&=
			\frac{1}{\sqrt{n_3}}{\left\|\bar{B}^{\dag}-\bar{A}^{\dagger}\right\|}_F
			\le\frac{1}{\sqrt{n_3}}\mu{\left\|\bar{B}^{\dagger}\right\|}_2^2{\|\bar{A}^{\dagger}\|}_2^2{\left\|\bar{E}\right\|}_F\\
         &=\mu{\|\mathcal{A}^{\dagger}\|}_2{\left\|\mathcal{B}^{\dagger}\right\|}_2{\left\|\mathcal{E}\right\|}_F.
		\end{aligned}
	\end{equation*}
	The proof is complete as
	$\mathrm{rank}(\bar{A})=\sum\limits_{i=1}^{n_3}{r_i}.$\end{proof}
	From Theorem 4.2, a corollary immediately follows.
	\begin{Corollary}
		If $\mathcal{A}$ and $\mathcal{B}=\mathcal{A+E}$ satisfy the conditions in Theorem 4.2, then
		\begin{equation*}
			\frac{{\left\|\mathcal{B}^{\dagger}-\mathcal{A}^{\dagger}\right\|}_F}{{\left\|\mathcal{B}^{\dagger}\right\|}_2}\le\mu\kappa\frac{{\left\|\mathcal{E}\right\|}_F}{{\left\|\mathcal{A}\right\|}_2},\
			\frac{{\left\|\mathcal{B}^{\dagger}-\mathcal{A}^{\dagger}\right\|}_2}{{\left\|\mathcal{B}^{\dagger}\right\|}_2}\le\lambda\kappa\frac{{\left\|\mathcal{E}\right\|}_2}{{\left\|\mathcal{A}\right\|}_2},
		\end{equation*}
	where $\mu$ and $\lambda$ are as in Theorem 4.2, $\kappa={\left\|\mathcal{A}^{\dagger}\right\|}_2{\left\|\mathcal{A}\right\|}_2$.
	\end{Corollary}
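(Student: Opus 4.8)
The plan is to obtain the corollary directly from the two inequalities \eqref{3.45} of Theorem 4.2 by a simple rescaling, with no new machinery required. First I would recall that Theorem 4.2 already gives, under the stated multirank hypotheses,
\begin{equation*}
	{\left\|\mathcal{B}^{\dagger}-\mathcal{A}^{\dagger}\right\|}_F\le\mu{\left\|\mathcal{A}^{\dagger}\right\|}_2{\left\|\mathcal{B}^{\dagger}\right\|}_2{\left\|\mathcal{E}\right\|}_F,\quad
	{\left\|\mathcal{B}^{\dagger}-\mathcal{A}^{\dagger}\right\|}_2\le\lambda{\left\|\mathcal{A}^{\dagger}\right\|}_2{\left\|\mathcal{B}^{\dagger}\right\|}_2{\left\|\mathcal{E}\right\|}_2.
\end{equation*}
Both right-hand sides carry the factor ${\left\|\mathcal{B}^{\dagger}\right\|}_2$, which is exactly the quantity appearing in the denominator of the corollary, so dividing through by it is the natural move.

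The key step is therefore to divide each inequality by ${\left\|\mathcal{B}^{\dagger}\right\|}_2$, giving
\begin{equation*}
	\frac{{\left\|\mathcal{B}^{\dagger}-\mathcal{A}^{\dagger}\right\|}_F}{{\left\|\mathcal{B}^{\dagger}\right\|}_2}\le\mu{\left\|\mathcal{A}^{\dagger}\right\|}_2{\left\|\mathcal{E}\right\|}_F,\quad
	\frac{{\left\|\mathcal{B}^{\dagger}-\mathcal{A}^{\dagger}\right\|}_2}{{\left\|\mathcal{B}^{\dagger}\right\|}_2}\le\lambda{\left\|\mathcal{A}^{\dagger}\right\|}_2{\left\|\mathcal{E}\right\|}_2.
\end{equation*}
Then I would insert the definition $\kappa={\left\|\mathcal{A}^{\dagger}\right\|}_2{\left\|\mathcal{A}\right\|}_2$ by writing ${\left\|\mathcal{A}^{\dagger}\right\|}_2=\kappa/{\left\|\mathcal{A}\right\|}_2$, which turns $\mu{\left\|\mathcal{A}^{\dagger}\right\|}_2{\left\|\mathcal{E}\right\|}_F$ into $\mu\kappa\,{\left\|\mathcal{E}\right\|}_F/{\left\|\mathcal{A}\right\|}_2$ and likewise for the spectral case. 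This yields precisely the two asserted bounds, with the same $\mu$ and $\lambda$ inherited from Theorem 4.2.

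There is essentially no genuine obstacle here; the only point demanding a word of justification is that the division by ${\left\|\mathcal{B}^{\dagger}\right\|}_2$ and the substitution involving ${\left\|\mathcal{A}\right\|}_2$ both require these norms to be nonzero. I would note that the multirank hypotheses of Theorem 4.2 force $\mathcal{A}$ and $\mathcal{B}$ (and hence $\mathcal{A}^{\dagger}$ and $\mathcal{B}^{\dagger}$) to be nonzero, so ${\left\|\mathcal{A}\right\|}_2>0$ and ${\left\|\mathcal{B}^{\dagger}\right\|}_2>0$, making both operations legitimate. With that remark the corollary is immediate, so the proof is a one-line rescaling of Theorem 4.2.
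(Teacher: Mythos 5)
Your proof is correct and matches the paper exactly: the paper offers no separate argument, stating only that the corollary ``immediately follows'' from Theorem 4.2, and your rescaling—dividing \eqref{3.45} by ${\left\|\mathcal{B}^{\dagger}\right\|}_2$ and writing ${\left\|\mathcal{A}^{\dagger}\right\|}_2=\kappa/{\left\|\mathcal{A}\right\|}_2$—is precisely that immediate deduction. Your remark about nonvanishing denominators is a reasonable (if pedantic) addition; note only that the multirank hypothesis alone does not literally exclude the zero tensor, but in that degenerate case the corollary's statement is vacuous anyway.
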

	\hskip 2em 
	In a similar fashion, another relative perturbation theorem of the tensor Moore-Penrose inverse can be obtained.
	\begin{theorem}
	Suppose $\mathcal{A}\in \mathbb{R}^{n_1\times n_2\times n_3}$, $\mathcal{B}=\mathcal{A+E}$, $\mathrm{multirank}(\mathcal{A})=\mathrm{multirank}(\mathcal{B})$. If ${\left\|\mathcal{A}^{\dagger}\right\|}_2{\left\|\mathcal{E}\right\|}_2<1$, then
		\begin{equation*}
			{\left\|\mathcal{B}^{\dagger}\right\|}_2\le{\left\|\mathcal{A}^{\dagger}\right\|}_2/\gamma,
			\end{equation*}
and	
	\begin{equation*}
		\frac{{\left\|\mathcal{B}^{\dagger}-\mathcal{A}^{\dagger}\right\|}_F}{{\left\|\mathcal{A}^{\dagger}\right\|}_2}\le\frac{\mu\kappa_1}{\gamma}\frac{{\left\|\mathcal{E}\right\|}_F}{{\left\|\mathcal{A}\right\|}_F},\ \frac{{\left\|\mathcal{B}^{\dagger}-\mathcal{A}^{\dagger}\right\|}_2}{{\left\|\mathcal{A}^{\dagger}\right\|}_2}\le\frac{\mu\kappa_2}{\gamma}\frac{{\left\|\mathcal{E}\right\|}_2}{{\left\|\mathcal{A}\right\|}_2},
	\end{equation*}
	where
	$\kappa_1={\left\|\mathcal{A}^{\dagger}\right\|}_2{\left\|A\right\|}_F,\
		\kappa_2={\left\|\mathcal{A}^{\dagger}\right\|}_2{\left\|A\right\|}_2,\ and\
	 \gamma=1-{\left\|\mathcal{A}^{\dagger}\right\|}_2{\left\|\mathcal{E}\right\|}_2.$
	\end{theorem}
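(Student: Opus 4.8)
The plan is to pass to the Fourier (block-diagonal) domain and apply, slice by slice, a classical matrix bound for the pseudoinverse, just as in the proofs of Theorems 4.1 and 4.2. First I would establish the bound on $\|\mathcal{B}^\dagger\|_2$. Applying (\ref{DFT}) turns $\mathcal{A},\mathcal{B},\mathcal{E}$ into the block-diagonal matrices $\bar{A},\bar{B},\bar{E}$, so by Lemma 2.1 the tensor spectral norms equal the matrix spectral norms and $\|\mathcal{B}^\dagger\|_2=\|\bar{B}^\dagger\|_2=\max_i\|(\bar{B}^{(i)})^\dagger\|_2$. The hypothesis $\mathrm{multirank}(\mathcal{A})=\mathrm{multirank}(\mathcal{B})$ means exactly that $\mathrm{rank}(\bar{A}^{(i)})=\mathrm{rank}(\bar{B}^{(i)})$ for each $i$, and since $\|(\bar{A}^{(i)})^\dagger\|_2\le\|\mathcal{A}^\dagger\|_2$ and $\|\bar{E}^{(i)}\|_2\le\|\mathcal{E}\|_2$, the scalar condition $\|(\bar{A}^{(i)})^\dagger\|_2\|\bar{E}^{(i)}\|_2<1$ holds on every slice.

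On each slice, rank preservation together with Weyl's inequality ($\sigma_{r_i}(\bar{B}^{(i)})\ge\sigma_{r_i}(\bar{A}^{(i)})-\|\bar{E}^{(i)}\|_2$ for the smallest nonzero singular value) yields the standard estimate
\begin{equation*}
\|(\bar{B}^{(i)})^\dagger\|_2\le\frac{\|(\bar{A}^{(i)})^\dagger\|_2}{1-\|(\bar{A}^{(i)})^\dagger\|_2\|\bar{E}^{(i)}\|_2}.
\end{equation*}
Since $(s,t)\mapsto s/(1-st)$ is increasing in each argument (for $st<1$), I may replace the slice quantities by their maxima $\|\mathcal{A}^\dagger\|_2$ and $\|\mathcal{E}\|_2$, obtaining $\|(\bar{B}^{(i)})^\dagger\|_2\le\|\mathcal{A}^\dagger\|_2/\gamma$ uniformly in $i$; taking the maximum over $i$ gives $\|\mathcal{B}^\dagger\|_2\le\|\mathcal{A}^\dagger\|_2/\gamma$.

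The two difference estimates then follow by inserting this bound into Theorem 4.2. That theorem already gives $\|\mathcal{B}^\dagger-\mathcal{A}^\dagger\|_F\le\mu\|\mathcal{A}^\dagger\|_2\|\mathcal{B}^\dagger\|_2\|\mathcal{E}\|_F$ together with its analogous spectral inequality; replacing $\|\mathcal{B}^\dagger\|_2$ by $\|\mathcal{A}^\dagger\|_2/\gamma$ and dividing by $\|\mathcal{A}^\dagger\|_2$ gives
\begin{equation*}
\frac{\|\mathcal{B}^\dagger-\mathcal{A}^\dagger\|_F}{\|\mathcal{A}^\dagger\|_2}\le\frac{\mu\|\mathcal{A}^\dagger\|_2}{\gamma}\|\mathcal{E}\|_F.
\end{equation*}
Writing $\|\mathcal{A}^\dagger\|_2=\kappa_1/\|\mathcal{A}\|_F$ (respectively $\kappa_2/\|\mathcal{A}\|_2$) recasts this in the stated relative form, and the spectral-norm bound follows identically.

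I expect the only genuine obstacle to be the slice-wise pseudoinverse bound and its passage to the maximum: one must confirm that rank is preserved at the level of each Fourier slice (which is precisely the content of the equal-multirank hypothesis) and that the monotonicity step legitimately produces the single common denominator $\gamma$ rather than a slice-dependent one. Everything else reduces to bookkeeping with Lemma 2.1 and the estimates of Theorem 4.2.
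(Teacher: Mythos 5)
Your proposal is correct and follows the route the paper intends: in fact the paper omits any explicit proof of this theorem (it appears immediately after the remark that it ``can be obtained in a similar fashion''), and the intended argument is precisely yours --- pass to the Fourier domain via (\ref{DFT}), use the classical rank-preserving perturbation bounds for matrix Moore--Penrose inverses on the block-diagonal matrices, and translate back through Lemma 2.1 and Theorem 4.2. Two remarks. First, your slice-by-slice treatment (Weyl's inequality plus the monotonicity of $s/(1-st)$) is sound, but it can be compressed: since $\mathrm{rank}(\bar{A})=\sum_i r_i=\mathrm{rank}(\bar{B})$ and $\|\bar{A}^{\dagger}\|_2\|\bar{E}\|_2=\|\mathcal{A}^{\dagger}\|_2\|\mathcal{E}\|_2<1$, the matrix theorem applies in one stroke to $\bar{A}$ and $\bar{B}$ themselves, giving $\|\mathcal{B}^{\dagger}\|_2=\|\bar{B}^{\dagger}\|_2\le\|\mathcal{A}^{\dagger}\|_2/\gamma$ with no per-slice bookkeeping. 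Second, and more substantively, your closing claim that ``the spectral-norm bound follows identically'' hides a constant: inserting $\|\mathcal{B}^{\dagger}\|_2\le\|\mathcal{A}^{\dagger}\|_2/\gamma$ into the spectral inequality of Theorem 4.2 yields
\begin{equation*}
\frac{\|\mathcal{B}^{\dagger}-\mathcal{A}^{\dagger}\|_2}{\|\mathcal{A}^{\dagger}\|_2}\le\frac{\lambda\kappa_2}{\gamma}\frac{\|\mathcal{E}\|_2}{\|\mathcal{A}\|_2},
\end{equation*}
with $\lambda$, not $\mu$, and $\mu<\lambda$ in the first two cases of Theorem 4.2. The statement as printed uses $\mu$ in both inequalities (without ever defining it), which is almost certainly a typo for $\lambda$: the spectral-norm constant in the underlying matrix result is genuinely $(1+\sqrt{5})/2$ in the rank-deficient case, so the literal $\mu$-version cannot be obtained by this route. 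Your proof is the right one, but you should state explicitly that what it delivers in the spectral case is the bound with $\lambda$.
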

\hskip 2em 
The technique above works well in analyzing the sensitivity of the tensor least squares problem.
\begin{theorem}
Suppose $\mathcal{A}\in \mathbb{R}^{n_1\times n_2\times n_3},\mathcal{B}\in \mathbb{R}^{n_1\times n_4\times n_3}, \tilde{\mathcal{B}}=\mathcal{B}+\mathcal{K},\tilde{\mathcal{A}}=\mathcal{A}+\mathcal{E}$, $\mathcal{X}$ and $\tilde{\mathcal{X}}=\mathcal{X}+\mathcal{H}$ are minimal-Frobenius norm solutions to the tensor least squares problems $\mathrm{min}\{\|\mathcal{A}*\mathcal{X}-\mathcal{B}\|_F\}$ and $\mathrm{min}\{\|\tilde{\mathcal{A}}*\tilde{\mathcal{X}}-\tilde{\mathcal{B}}\|_F\}$ respectively. If $\mathrm{multirank}(\mathcal{A})=\mathrm{multirank}(\tilde{\mathcal{A}}),\|\mathcal{A}^{\dagger}\|_2\|\mathcal{E}\|_2<1$, then
\begin{equation*}
	\|\mathcal{H}\|_F\le\frac{\kappa_1}{\gamma}\left(\frac{\|\mathcal{E}\|_2}{\|\mathcal{A}\|_F}\|\mathcal{X}\|_F+\frac{\|\mathcal{K}\|_F}{\|\mathcal{A}\|_F}+\frac{\kappa_1}{\gamma}\frac{\|\mathcal{E}\|_2}{\|\mathcal{A}\|_F}\frac{\|\mathcal{R}\|_F}{\|\mathcal{A}\|_F}+\sqrt{n_3}\|\mathcal{E}\|_2\|\mathcal{Y}*\mathcal{X}\|_F\right),
\end{equation*}
\begin{equation*}
	\|\mathcal{H}\|_2\le\frac{\kappa_2}{\gamma}\left(\frac{\|\mathcal{E}\|_2}{\|\mathcal{A}\|_2}\|\mathcal{X}\|_2+\frac{\|\mathcal{K}\|_2}{\|\mathcal{A}\|_2}+\frac{\kappa_2}{\gamma}\frac{\|\mathcal{E}\|_2}{\|\mathcal{A}\|_2}\frac{\|\mathcal{R}\|_2}{\|\mathcal{A}\|_2}+\|\mathcal{E}\|_2\|\mathcal{Y}*\mathcal{X}\|_2\right),
\end{equation*}
where $\kappa_i's$ and $\gamma$ are as in Theorem 4.3,  $\mathcal{R}=\mathcal{B}-\mathcal{A}*\mathcal{X},\mathcal{Y}=(\mathcal{A}^{\dagger})^\mathrm T$. Particularly, if $\mathrm{multirank}(\mathcal{A})=\mathrm{multirank}(\tilde{\mathcal{A}})=[n_2,n_2,\ldots,n_2]$, we have
\begin{equation*}
	\|\mathcal{H}\|_F\le\frac{\kappa_1}{\gamma}\left(\frac{\|\mathcal{E}\|_2}{\|\mathcal{A}\|_F}\|\mathcal{X}\|_F+\frac{\|\mathcal{K}\|_F}{\|\mathcal{A}\|_F}+\frac{\kappa_1}{\gamma}\frac{\|\mathcal{E}\|_2}{\|\mathcal{A}\|_F}\frac{\|\mathcal{R}\|_F}{\|\mathcal{A}\|_F}\right),
\end{equation*}
\begin{equation*}
	\|\mathcal{H}\|_2\le\frac{\kappa_2}{\gamma}\left(\frac{\|\mathcal{E}\|_2}{\|\mathcal{A}\|_2}\|\mathcal{X}\|_2+\frac{\|\mathcal{K}\|_2}{\|\mathcal{A}\|_2}+\frac{\kappa_2}{\gamma}\frac{\|\mathcal{E}\|_2}{\|\mathcal{A}\|_2}\frac{\|\mathcal{R}\|_2}{\|\mathcal{A}\|_2}\right).
\end{equation*}
\end{theorem}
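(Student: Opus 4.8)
The plan is to push the whole problem through the block-diagonalization (\ref{DFT}), apply the classical matrix perturbation theory for the minimal-norm least squares solution slice-by-slice in the Fourier domain, and then translate the resulting matrix estimate back into tensor norms via Lemma 2.1, keeping careful track of the $\sqrt{n_3}$ factors. The only reason the two stated bounds differ in their last term is that the Frobenius conversion carries a $\sqrt{n_3}$ which the spectral conversion does not, so I expect both inequalities to come from one and the same matrix computation.

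First I would record the closed forms $\mathcal{X}=\mathcal{A}^{\dagger}*\mathcal{B}$ and $\tilde{\mathcal{X}}=\tilde{\mathcal{A}}^{\dagger}*\tilde{\mathcal{B}}$. To justify this, apply (\ref{DFT}): by Lemma 2.1, $\|\mathcal{A}*\mathcal{X}-\mathcal{B}\|_F^2=\frac{1}{n_3}\sum_{i=1}^{n_3}\|\bar{A}^{(i)}\bar{X}^{(i)}-\bar{B}^{(i)}\|_F^2$, so the problem decouples across frontal slices in the Fourier domain; the minimal-norm minimizer of each block is $\bar{X}^{(i)}=(\bar{A}^{(i)})^{\dagger}\bar{B}^{(i)}$, and since the slice-wise choices are independent, the minimal total Frobenius norm is attained by taking all of them, giving $\bar{X}=\bar{A}^{\dagger}\bar{B}$, i.e.\ $\mathcal{X}=\mathcal{A}^{\dagger}*\mathcal{B}$. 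Next I would translate the hypotheses: $\mathrm{multirank}(\mathcal{A})=\mathrm{multirank}(\tilde{\mathcal{A}})$ says $\mathrm{rank}(\bar{A}^{(i)})=\mathrm{rank}(\tilde{\bar{A}}^{(i)})$ for every $i$, hence $\mathrm{rank}(\bar{A})=\mathrm{rank}(\tilde{\bar{A}})=\sum_i r_i$, while $\|\mathcal{A}^{\dagger}\|_2\|\mathcal{E}\|_2<1$ becomes $\|\bar{A}^{\dagger}\|_2\|\bar{E}\|_2<1$ by Lemma 2.1. These are exactly the acute-perturbation hypotheses under which the matrix least squares perturbation theorem (the Wedin-type bound underlying \cite{Sun2001}) applies to $(\bar{A},\tilde{\bar{A}})$ with right-hand sides $(\bar{B},\tilde{\bar{B}})$.

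Applying that matrix result to the block-diagonal data yields
\begin{equation*}
\|\bar{H}\|_F\le\frac{\bar{\kappa}_1}{\gamma}\left(\frac{\|\bar{E}\|_2}{\|\bar{A}\|_F}\|\bar{X}\|_F+\frac{\|\bar{K}\|_F}{\|\bar{A}\|_F}+\frac{\bar{\kappa}_1}{\gamma}\frac{\|\bar{E}\|_2}{\|\bar{A}\|_F}\frac{\|\bar{R}\|_F}{\|\bar{A}\|_F}+\|\bar{E}\|_2\|\bar{Y}\bar{X}\|_F\right),
\end{equation*}
where $\bar{\kappa}_1=\|\bar{A}^{\dagger}\|_2\|\bar{A}\|_F$, $\bar{R}=\bar{B}-\bar{A}\bar{X}$, and $\bar{Y}$ is the Fourier image of $\mathcal{Y}=(\mathcal{A}^{\dagger})^{\mathrm{T}}$ so that $\bar{Y}\bar{X}=\overline{\mathcal{Y}*\mathcal{X}}$, together with the companion spectral inequality. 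The conversion then uses $\|\bar{A}\|_F=\sqrt{n_3}\|\mathcal{A}\|_F$, whence $\bar{\kappa}_1=\sqrt{n_3}\kappa_1$, while $\gamma$ is unchanged because $\|\bar{A}^{\dagger}\|_2=\|\mathcal{A}^{\dagger}\|_2$ and $\|\bar{E}\|_2=\|\mathcal{E}\|_2$; likewise $\|\bar{X}\|_F=\sqrt{n_3}\|\mathcal{X}\|_F$, $\|\bar{K}\|_F=\sqrt{n_3}\|\mathcal{K}\|_F$, $\|\bar{R}\|_F=\sqrt{n_3}\|\mathcal{R}\|_F$, $\|\bar{Y}\bar{X}\|_F=\sqrt{n_3}\|\mathcal{Y}*\mathcal{X}\|_F$, and $\|\mathcal{H}\|_F=\frac{1}{\sqrt{n_3}}\|\bar{H}\|_F$. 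Substituting, the $\sqrt{n_3}$ cancels in the first three summands but survives in the last, which carries no $\|\mathcal{A}\|_F$ normalization, reproducing the stated Frobenius bound; the spectral bound follows identically from $\|\mathcal{A}\|_2=\|\bar{A}\|_2$, under which every spectral norm transforms with no $\sqrt{n_3}$, so its last term appears without the extra factor. For the special case $\mathrm{multirank}(\mathcal{A})=[n_2,\ldots,n_2]$ each $\tilde{\bar{A}}^{(i)}$ has full column rank, so $\tilde{\bar{A}}^{\dagger}\tilde{\bar{A}}=I$ and the projector $I-\tilde{\bar{A}}^{\dagger}\tilde{\bar{A}}$, which is exactly the factor that generates the $\mathcal{Y}*\mathcal{X}$ term in the Wedin identity, vanishes, removing the last summand.

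The hard part will be the bookkeeping of the $\sqrt{n_3}$ factors, and in particular accounting for why the $\mathcal{Y}*\mathcal{X}$ term alone retains a $\sqrt{n_3}$ in the Frobenius case: this is traceable to $\bar{\kappa}_1=\sqrt{n_3}\kappa_1$ inflating the common prefactor while only the last summand lacks a copy of $\|\bar{A}\|_F$ to absorb it. A secondary point demanding care is the justification that the minimal-Frobenius-norm tensor solution really is $\mathcal{A}^{\dagger}*\mathcal{B}$ (the decoupling-and-minimal-norm argument above), and that equality of multiranks supplies precisely the rank hypothesis needed so that the acute matrix least squares perturbation theorem, rather than a rank-change variant, is the correct tool.
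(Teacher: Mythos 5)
Your proposal is correct and takes essentially the same route the paper intends: the paper actually omits the proof of this theorem (remarking only that ``the technique above works well''), and that technique is precisely what you carry out --- block-diagonalize via the DFT, apply the classical rank-preserving matrix least squares perturbation bound (the Wedin-type result from Sun's matrix perturbation theory) to $\bar{A}$, $\bar{B}$, $\bar{E}$, $\bar{K}$, and convert back through Lemma 2.1. Your $\sqrt{n_3}$ bookkeeping is also right: $\bar{\kappa}_1=\sqrt{n_3}\,\kappa_1$ cancels in every summand normalized by $\|\bar{A}\|_F$ and survives only on the $\mathcal{Y}*\mathcal{X}$ term, and in the full-column-rank case the projector $\mathcal{I}-\tilde{\mathcal{A}}^{\dagger}*\tilde{\mathcal{A}}$ that generates this term vanishes, exactly as you say.
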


\begin{remark}
		Theorems 4.1 and 4.2 give 
		upper bounds on the absolute perturbation for the Moore-Penrose inverse of tensors, while 
		Theorem 4.3 describes the 
		bounds of the relative perturbation. Theorem 4.4 reflects the sensitivity of the tensor least squares problem to perturbation. It 
		is clear that $\kappa_i's$ can be viewed as condition numbers.
\end{remark}
\section{Perturbation analysis for multilinear system based on the SMW formula}
\hskip 2em We have derived the SMW formula for invertible tensors in section 3. To deal with tensors which are not invertible, we generalize the SMW formula for the Moore-Penrose inverse of tensors in this section. Moreover, based on the new established theorem, a sensitivity analysis is also performed for multilinear system by deriving upper bounds for the solution of the multilinear system. We first propose a lemma necessary for the subsequent work.
\begin{lemma}\label{SMW_Lem}
	Let $\mathcal{A}\in \mathbb{R}^{n_1\times n_2 \times n_3}$. For any $\mathcal{U}\in \mathbb{R}^{n_1\times 1 \times n_3}$,
there exists a tensor $\mathcal{X}\in \mathbb{R}^{n_1\times 1 \times n_3}$ which is contained in $\mathrm{R}(\mathcal{A})$ and a tensor $\mathcal{Y}\in \mathbb{R}^{n_1\times 1 \times n_3}$ being orthogonal to $\mathrm{R}(\mathcal{A})$, such that
	\begin{equation*}
		\mathcal{U}=\mathcal{X}+\mathcal{Y}.
	\end{equation*}
\end{lemma}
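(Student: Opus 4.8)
The plan is to construct the decomposition explicitly by projecting $\mathcal{U}$ onto $R(\mathcal{A})$ using the Moore--Penrose inverse $\mathcal{A}^{\dagger}$, whose existence is guaranteed by the definition recalled above (equivalently, by forming $(\bar{A}^{(i)})^{\dagger}$ slicewise in the Fourier domain). Concretely, I would set
\[
\mathcal{X}=\mathcal{A}*\mathcal{A}^{\dagger}*\mathcal{U}, \qquad \mathcal{Y}=\mathcal{U}-\mathcal{X}=(\mathcal{I}-\mathcal{A}*\mathcal{A}^{\dagger})*\mathcal{U},
\]
so that $\mathcal{U}=\mathcal{X}+\mathcal{Y}$ holds by construction and only the two structural requirements remain to be checked.

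The membership $\mathcal{X}\in R(\mathcal{A})$ is immediate: writing $\mathcal{Z}=\mathcal{A}^{\dagger}*\mathcal{U}\in\mathbb{R}^{n_2\times 1\times n_3}$, we have $\mathcal{X}=\mathcal{A}*\mathcal{Z}$, which lies in $R(\mathcal{A})$ by definition. The substantive step is to show that $\mathcal{Y}$ is orthogonal to $R(\mathcal{A})$, i.e.\ $\langle \mathcal{A}*\mathcal{W},\mathcal{Y}\rangle=\mathbf{0}$ for every $\mathcal{W}\in\mathbb{R}^{n_2\times1\times n_3}$. Using the order-reversing transpose rule $(\mathcal{A}*\mathcal{W})^{\mathrm T}=\mathcal{W}^{\mathrm T}*\mathcal{A}^{\mathrm T}$ together with associativity of the t-product, the inner product reduces to $\mathcal{W}^{\mathrm T}*(\mathcal{A}^{\mathrm T}*\mathcal{Y})$, so it suffices to prove $\mathcal{A}^{\mathrm T}*\mathcal{Y}=\mathbf{0}$, for which I would establish the identity $\mathcal{A}^{\mathrm T}*\mathcal{A}*\mathcal{A}^{\dagger}=\mathcal{A}^{\mathrm T}$.

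This last identity I would derive purely from the Moore--Penrose axioms: since $(\mathcal{A}*\mathcal{A}^{\dagger})^{\mathrm T}=\mathcal{A}*\mathcal{A}^{\dagger}$ and $\mathcal{A}*\mathcal{A}^{\dagger}*\mathcal{A}=\mathcal{A}$, one gets $\mathcal{A}^{\mathrm T}*\mathcal{A}*\mathcal{A}^{\dagger}=\mathcal{A}^{\mathrm T}*(\mathcal{A}*\mathcal{A}^{\dagger})^{\mathrm T}=(\mathcal{A}*\mathcal{A}^{\dagger}*\mathcal{A})^{\mathrm T}=\mathcal{A}^{\mathrm T}$. Consequently $\mathcal{A}^{\mathrm T}*\mathcal{Y}=(\mathcal{A}^{\mathrm T}-\mathcal{A}^{\mathrm T}*\mathcal{A}*\mathcal{A}^{\dagger})*\mathcal{U}=\mathbf{0}$, and the desired orthogonality follows.

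The main obstacle I anticipate is the bookkeeping with the t-product transpose: unlike the matrix case, the transpose both transposes each frontal slice and reverses the order of slices $2$ through $n_3$, so the rule $(\mathcal{A}*\mathcal{B})^{\mathrm T}=\mathcal{B}^{\mathrm T}*\mathcal{A}^{\mathrm T}$ must be applied carefully, and one must confirm that the symmetry of $\mathcal{A}*\mathcal{A}^{\dagger}$ is exactly one of the four defining Moore--Penrose relations. A fully equivalent alternative, should one prefer to avoid the tensor-level algebra, is to pass to the Fourier domain via (\ref{DFT}): there the claim decomposes into the standard orthogonal splitting of each $\bar{U}^{(i)}$ into the column space of $\bar{A}^{(i)}$ and its orthogonal complement, after which $\mathcal{X}$ and $\mathcal{Y}$ are recovered by the inverse transform. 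The only care needed in that route is to verify that slicewise orthogonality transports back to the vanishing of the tubal inner product $\mathcal{X}^{\mathrm T}*\mathcal{Y}$.
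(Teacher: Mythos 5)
Your proof is correct, but it takes a genuinely different route from the paper's. The paper argues entirely in the Fourier domain: it applies the FFT to $\mathcal{A}$ and $\mathcal{U}$, decomposes each frontal slice $\bar{U}^{(i)}$ into a component in $\mathrm{R}(\bar{A}^{(i)})$ plus a component in $\mathrm{R}^{\perp}(\bar{A}^{(i)})$, folds these slicewise pieces into $\bar{\mathcal{X}}$, $\bar{\mathcal{Y}}$, and recovers $\mathcal{X}$, $\mathcal{Y}$ by the inverse FFT, leaving it to the reader to verify that slicewise orthogonality transports back to t-product orthogonality --- this is exactly the ``fully equivalent alternative'' you sketch at the end. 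Your primary argument instead stays in the real t-product algebra: you set $\mathcal{X}=\mathcal{A}*\mathcal{A}^{\dagger}*\mathcal{U}$, $\mathcal{Y}=(\mathcal{I}-\mathcal{A}*\mathcal{A}^{\dagger})*\mathcal{U}$, and deduce $\mathcal{A}^{\mathrm T}*\mathcal{Y}=\mathbf{0}$ from the identity $\mathcal{A}^{\mathrm T}*\mathcal{A}*\mathcal{A}^{\dagger}=\mathcal{A}^{\mathrm T}$, which you correctly obtain from two Moore--Penrose axioms and the reversal rule $(\mathcal{A}*\mathcal{B})^{\mathrm T}=\mathcal{B}^{\mathrm T}*\mathcal{A}^{\mathrm T}$ (both available: the axioms are in the paper's Definition of $\mathcal{A}^{\dagger}$, the reversal rule is standard for the t-product, and existence of a real $\mathcal{A}^{\dagger}$ is guaranteed by \cite{Jin2017}). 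What your approach buys is an explicit closed-form projector decomposition and a proof that never leaves the real field, so you sidestep the conjugate-symmetry/realness bookkeeping that the paper's inverse-FFT step quietly requires; what the paper's approach buys is that it reduces everything to the classical orthogonal decomposition for matrices, with no tensor-level transpose algebra at all.
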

	\begin{proof}
		Assume $\bar{\mathcal{A}}=\mathrm{fft}(\mathcal{A},[],3)$ and $\bar{\mathcal{U}}=\mathrm{fft}(\mathcal{U},[],3)$, whose $i$th frontal slices are $\bar{A}^{(i)}$ and $\bar{U}^{(i)}$ respectively. For each $\bar{U}^{(i)}$, there exists $\bar{X}^{(i)} \in \mathrm{R}(\bar{A}^{(i)})$ and $\bar{Y}^{(i)} \in \mathrm{R}^\perp (\bar{A}^{(i)})$ such that
		\begin{equation*}\label{deco}
			\bar{U}^{(i)}=\bar{X}^{(i)}+\bar{Y}^{(i)}.
		\end{equation*}
	Set
	\begin{equation*}
		\mathcal{\bar{X}}=\mathtt{fold}\left(\begin{bmatrix}
			\bar{X}^{(1)}\\\bar{X}^{(2)}\\\vdots\\ \bar{X}^{(n_3)}
		\end{bmatrix}\right), \
	\mathcal{\bar{Y}}=\mathtt{fold}\left(\begin{bmatrix}
		\bar{Y}^{(1)}\\\bar{Y}^{(2)}\\\vdots\\ \bar{Y}^{(n_3)}
	\end{bmatrix}\right),
	\end{equation*}
	and
	\begin{equation*}
		\mathcal{X}=\mathtt{ifft}\left(\bar{\mathcal{X}},[],3\right),\ \mathcal{Y}=\mathtt{ifft}\left(\bar{\mathcal{Y}},[],3\right).
	\end{equation*}
It is easy to verify that the tensors $\mathcal{X}$ and $\mathcal{Y}$ satisfy the requirements.
\end{proof}


\begin{theorem}\label{SMW_MOORE}(SMW formula for the tensor Moore-Penrose inverse).
	Suppose $\mathcal{A}\in \mathbb{R}^{n_1\times n_1 \times n_3}$, $\mathcal{U}\in \mathbb{R}^{n_1\times n_2 \times n_3}$, $\mathcal{V}\in \mathbb{R}^{n_2\times n_1 \times n_3}$, $\mathcal{B}\in \mathbb{R}^{n_2\times n_2 \times n_3}$, and the following conditions are satisfied:
	
	1. $\mathcal{U}=\mathcal{X}_1+\mathcal{Y}_1$, where the lateral slices of ${\mathcal{X}}_1$ are contained in $ \mathrm{R}(\mathcal{A})$, and the lateral slices of $\mathcal{Y}_1$ are orthogonal to $\mathrm{R}(\mathcal{A})$;
	
	2. $\mathcal{V}=\mathcal{X}_2+\mathcal{Y}_2$, where the lateral slices of ${\mathcal{X}}_2$ are contained in $ \mathrm{R}(\mathcal{A}^{\mathrm{T}})$, and the lateral slices of $\mathcal{Y}_1$ are orthogonal to $\mathrm{R}(\mathcal{A}^{\mathrm{T}})$;
	
	3. (1) $\mathcal{E}_2*\mathcal{B}^{\dagger}*\mathcal{E}_1^{\mathrm{T}}*\mathcal{Y}_1*\mathcal{B}=\mathcal{E}_2$
	(2) $\mathcal{X}_1*\mathcal{E}_1^T*\mathcal{Y}_1*\mathcal{B}=\mathcal{X}_1*\mathcal{B}$
	(3) $\mathcal{Y}_1*\mathcal{E}_1^T*\mathcal{Y}_1=\mathcal{Y}_1$;
	
	4. (1) $\mathcal{B}*\mathcal{Y}_2^T*\mathcal{E}_2*\mathcal{B}^{\dagger}*\mathcal{E}_1^T=\mathcal{E}_1^{\mathrm{T}}$
	(2) $\mathcal{B}*\mathcal{Y}_2^{\mathrm{T}}*\mathcal{E}_2*\mathcal{X}_2^{\mathrm{T}}=\mathcal{B}*\mathcal{X}_2^{\mathrm{T}}$
	(3) $\mathcal{E}_2*\mathcal{Y}_2^{\mathrm{T}}*\mathcal{E}_2=\mathcal{E}_2,$

	where $\mathcal{E}_i=\mathcal{Y}_i*(\mathcal{Y}_i^{\mathrm{T}}*\mathcal{Y}_i)^{\dagger},i=1,2.$
	
	Then the Moore-Penrose inverse of tensor
	\begin{equation*}
		\begin{aligned}
			\mathcal{M}&=\mathcal{A}+\mathcal{U}*\mathcal{B}*\mathcal{V}\\
			&=\mathcal{A}+(\mathcal{X}_1+\mathcal{Y}_1)*\mathcal{B}*(\mathcal{X}_2+\mathcal{Y}_2)^{\mathrm{T}}
		\end{aligned}
	\end{equation*}
	can be represented as:
	\begin{equation*}
		\mathcal{M}^{\dagger}=\mathcal{A}^{\dagger}-\mathcal{E}_2*\mathcal{X}_2^{\mathrm{T}}*\mathcal{A}^{\dagger}-\mathcal{A}^{\dagger}*\mathcal{X}_1*\mathcal{E}_1^{\mathrm{T}}+\mathcal{E}_2*(\mathcal{B}^{\dagger}+\mathcal{X}_2^{\mathrm{T}}*\mathcal{A}^{\dagger}*\mathcal{X}_1)*\mathcal{E}_1^{\mathrm{T}}.
	\end{equation*}
\end{theorem}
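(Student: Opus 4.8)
The plan is to mimic the proof of Theorem 3.4: pass to the Fourier domain via the FFT, reduce the claimed tensor identity to a purely matrix-level Sherman--Morrison--Woodbury formula for the Moore--Penrose inverse on each frontal slice, and then fold back. First I would apply the FFT along the third mode to $\mathcal{A},\mathcal{U},\mathcal{B},\mathcal{V},\mathcal{X}_j,\mathcal{Y}_j,\mathcal{E}_j$ and use (\ref{DFT}) to obtain the block-diagonal matrices $\bar A,\bar U,\bar B,\bar V$ and their blocks. Three transfer facts drive the reduction: the t-product becomes slice-wise matrix multiplication; the Moore--Penrose inverse transforms slice-wise, i.e. $\bar B^{(i)}=(\bar A^{(i)})^{\dagger}$ as recorded before Theorem 4.1; and the tensor transpose becomes the conjugate transpose slice-wise, so that the $i$-th frontal slice of $\overline{\mathcal{A}^{\mathrm T}}$ equals $(\bar A^{(i)})^{*}$. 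Consequently the desired identity for $\mathcal{M}^{\dagger}$ holds if and only if, for every $i$, the analogous matrix identity holds for $\bar M^{(i)}=\bar A^{(i)}+\bar U^{(i)}\bar B^{(i)}\bar V^{(i)}$; so it suffices to prove one matrix statement per slice and then apply $\texttt{ifft}$.

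Next I would translate the hypotheses slice-wise. The splittings in Conditions 1 and 2 (which exist by Lemma \ref{SMW_Lem} applied to each lateral slice) say that the columns of $\bar X_1^{(i)}$ lie in $\mathrm{R}(\bar A^{(i)})$ while those of $\bar Y_1^{(i)}$ satisfy $(\bar A^{(i)})^{*}\bar Y_1^{(i)}=0$, and symmetrically for $\bar X_2^{(i)},\bar Y_2^{(i)}$ relative to $\mathrm{R}((\bar A^{(i)})^{*})$; these yield the key annihilations $(\bar A^{(i)})^{\dagger}\bar Y_1^{(i)}=0$ and $(\bar Y_2^{(i)})^{*}(\bar A^{(i)})^{\dagger}=0$. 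A useful simplification is that $\mathcal{E}_j=\mathcal{Y}_j*(\mathcal{Y}_j^{\mathrm T}*\mathcal{Y}_j)^{\dagger}=(\mathcal{Y}_j^{\mathrm T})^{\dagger}$, since $M^{\dagger}=(M^{*}M)^{\dagger}M^{*}$ for any matrix $M$; slice-wise this reads $\bar E_j^{(i)}=\bar Y_j^{(i)}\big((\bar Y_j^{(i)})^{*}\bar Y_j^{(i)}\big)^{\dagger}$ for $j=1,2$. Under this dictionary, parts (1)--(3) of Conditions 3 and 4 become exactly the matrix identities needed for the slice-wise version of the claimed expression for $\mathcal{M}^{\dagger}$ to satisfy the four Penrose equations, and I would verify those four equations directly on each block.

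The main obstacle is precisely this four-equation verification at the matrix level: it is a long but mechanical computation in which each of the six sub-conditions in Conditions 3 and 4 is consumed once to cancel a cross term. The delicate bookkeeping is to expand $\bar M^{(i)}(\bar M^{(i)})^{\dagger}\bar M^{(i)}$, $(\bar M^{(i)})^{\dagger}\bar M^{(i)}(\bar M^{(i)})^{\dagger}$ and the two symmetry conditions so that the range annihilations above kill the mixed $\bar A$--$\bar Y$ terms while the surviving terms telescope, leaving $\bar M^{(i)}$ and a Hermitian idempotent respectively. Once the four equations hold on every slice, uniqueness of the Moore--Penrose inverse identifies $(\bar M^{(i)})^{\dagger}$ with the displayed block, and reassembling through $\texttt{ifft}$ recovers the stated tensor formula.
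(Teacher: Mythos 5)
Your proposal is correct, but it follows a genuinely different (Fourier-domain) route from the paper's own proof, which never leaves the t-product algebra. The paper verifies the four Penrose conditions directly on tensors: it expands $\mathcal{M}*\mathcal{M}^{\dagger}$ with t-products, kills the cross terms using the annihilations implied by orthogonality ($\mathcal{A}*\mathcal{Y}_2=\mathbf{0}$, $\mathcal{A}^{\dagger}*\mathcal{Y}_1=\mathbf{0}$, $\mathcal{X}_2^{\mathrm{T}}*\mathcal{Y}_2=\mathbf{0}$, $\mathcal{A}*\mathcal{A}^{\dagger}*\mathcal{X}_1=\mathcal{X}_1$) together with Conditions 3--4, collapses the products to $\mathcal{M}*\mathcal{M}^{\dagger}=\mathcal{A}*\mathcal{A}^{\dagger}+\mathcal{Y}_1*\mathcal{E}_1^{\mathrm{T}}$ and $\mathcal{M}^{\dagger}*\mathcal{M}=\mathcal{A}^{\dagger}*\mathcal{A}+\mathcal{E}_2*\mathcal{Y}_2^{\mathrm{T}}$, and then reads off symmetry, $\mathcal{M}*\mathcal{M}^{\dagger}*\mathcal{M}=\mathcal{M}$, and the remaining condition, finishing by uniqueness. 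You instead conjugate by the FFT and do the identical verification slice-by-slice. Your reduction is sound: the map $\mathcal{A}\mapsto\bar{A}$ is an algebra isomorphism taking t-products to blockwise products, Moore--Penrose inverses to slice-wise Moore--Penrose inverses (as recorded before Theorem 4.1), and, for real tensors, transposes to slice-wise conjugate transposes; and your slice-wise dictionary for the hypotheses (columns of $\bar{X}_1^{(i)}$ in $\mathrm{R}(\bar{A}^{(i)})$, $(\bar{A}^{(i)})^{*}\bar{Y}_1^{(i)}=0$ hence $(\bar{A}^{(i)})^{\dagger}\bar{Y}_1^{(i)}=0$, and symmetrically for the second pair) is exactly right, as is the observation $\mathcal{E}_j=(\mathcal{Y}_j^{\mathrm{T}})^{\dagger}$. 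What each route buys: yours turns the problem into a purely matrix-level statement per slice, in the same spirit as the paper's proof of the invertible-tensor SMW formula (Theorem 3.4), and would even permit quoting an existing matrix SMW theorem for Moore--Penrose inverses should its hypotheses match Conditions 3--4; the paper's route avoids all transfer bookkeeping, and in particular never needs the fact that $\overline{\mathcal{A}^{\mathrm{T}}}=\bar{A}^{*}$, which it nowhere states. The one caveat about your plan is that the Fourier reduction does not shorten the mathematical core: the ``long but mechanical'' four-equation verification you defer is token-for-token the same cancellation argument the paper writes out with t-products, so it must still be carried out in full; your description of its structure (the range annihilations kill the mixed $\bar{A}$--$\bar{Y}$ terms, each of the six sub-conditions is consumed once, and $\bar{M}^{(i)}(\bar{M}^{(i)})^{\dagger}$ collapses to the Hermitian matrix $\bar{A}^{(i)}(\bar{A}^{(i)})^{\dagger}+\bar{Y}_1^{(i)}(\bar{E}_1^{(i)})^{*}$) is nonetheless an accurate account of how that computation goes.
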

	\begin{proof}
        The formula is verified by direct computation.
		We recall that the Moore-Penrose inverse is the unique solution which satisfies the following four conditions:
		
		$(a) \mathcal{M}*\mathcal{M}^{\dagger}*\mathcal{M}=\mathcal{M},$~~~
		$(b) \mathcal{M}^{\dagger}*\mathcal{M}*\mathcal{M}^{\dagger}=\mathcal{M}^{\dagger},$\\
		$(c) (\mathcal{M}*\mathcal{M}^{\dagger})^{\mathrm{T}}=\mathcal{M}*\mathcal{M}^{\dagger},$~~~
		$(d) (\mathcal{M}^{\dagger}*\mathcal{M})^{\mathrm{T}}=\mathcal{M}^{\dagger}*\mathcal{M}.$
		
		 By simple expansion,
		\begin{equation}\label{4.4}
			\begin{aligned}
				\mathcal{M}*\mathcal{M}^{\dagger}=&\mathcal{A}*\mathcal{M}^{\dagger}-\mathcal{A}*\mathcal{A}^{\dagger}*\mathcal{X}_1*\mathcal{E}_1^{\mathrm{T}}+\mathcal{A}*\mathcal{E}_2*(\mathcal{B}^{\dagger}+\mathcal{X}_2^{\mathrm{T}}*\mathcal{A}^{\dagger}*\mathcal{X}_1)*\mathcal{E}_1\\
				&+(\mathcal{X}_1+\mathcal{Y}_1)*\mathcal{B}*(\mathcal{X}_2*+\mathcal{Y}_2)^{\mathrm{T}}*\mathcal{A}^{\dagger}\\
				&-(\mathcal{X}_1+\mathcal{Y}_1)*\mathcal{B}*(\mathcal{X}_2+\mathcal{Y}_2)^{\mathrm{T}}*\mathcal{E}_2*\mathcal{X}_2^{\mathrm{T}}*\mathcal{A}^{\dagger}\\
				&-(\mathcal{X}_1+\mathcal{Y}_1)*\mathcal{B}*(\mathcal{X}_2*+\mathcal{Y}_2)^{\mathrm{T}}*\mathcal{A}^{\dagger}*\mathcal{X}_1*\mathcal{E}_1^{\mathrm{T}}\\
				&+(\mathcal{X}_1+\mathcal{Y}_1)*\mathcal{B}*(\mathcal{X}_2*+\mathcal{Y}_2)^{\mathrm{T}}*\mathcal{E}_2*\mathcal{X}_2^{\mathrm{T}}*\mathcal{A}^{\dagger}*\mathcal{X}_1*\mathcal{E}_1^{\mathrm{T}}\\
				&+(\mathcal{X}_1+\mathcal{Y}_1)*\mathcal{B}*(\mathcal{X}_2*+\mathcal{Y}_2)^{\mathrm{T}}*\mathcal{E}_2*\mathcal{B}^{\dagger}*\mathcal{E}_1^{\mathrm{T}}.
			\end{aligned}
		\end{equation}
		Since the lateral slices of $\mathcal{Y}_2$ are orthogonal to $\mathrm{R}(\mathcal{A}^{\mathrm{T}})$, we have $\mathcal{A}*\mathcal{Y}_2=\mathbf{0}$, $\mathcal{Y}_2*\mathcal{A}^{\dagger}=\mathbf{0}$, and $\mathcal{X}_2^{\mathrm{T}}*\mathcal{Y}_2=\mathbf{0}$, which simplify (\ref{4.4}) to
		\begin{equation}\label{4.5}
			\begin{aligned}
				\mathcal{M}*\mathcal{M}^{\dagger}=&\mathcal{A}*\mathcal{A}^{\dagger}-\mathcal{A}*\mathcal{A}^{\dagger}*\mathcal{X}_1*\mathcal{E}_1^{\mathrm{T}}\\
				&+(\mathcal{X}_1+\mathcal{Y}_1)*\mathcal{B}*\mathcal{X}_2^{\mathrm{T}}*\mathcal{A}^{\dagger}
				-(\mathcal{X}_1+\mathcal{Y}_1)*\mathcal{B}*\mathcal{Y}_2^{\mathrm{T}}*\mathcal{E}_2*\mathcal{X}_2^{\mathrm{T}}*\mathcal{A}^{\dagger}\\
				&-(\mathcal{X}_1+\mathcal{Y}_1)*\mathcal{B}*\mathcal{X}_2^{\mathrm{T}}\mathcal{A}^{\dagger}*\mathcal{X}_1*\mathcal{E}_1^{\mathrm{T}}\\
				&+(\mathcal{X}_1+\mathcal{Y}_1)*\mathcal{B}*\mathcal{Y}_2^{\mathrm{T}}*\mathcal{E}_2*\mathcal{X}_2^{\mathrm{T}}*\mathcal{A}^{\dagger}*\mathcal{X}_1*\mathcal{E}_1^{\mathrm{T}}\\
				&+(\mathcal{X}_1+\mathcal{Y}_1)*\mathcal{B}*\mathcal{Y}_2^{\mathrm{T}}*\mathcal{E}_2*\mathcal{X}_2^{\mathrm{T}}*\mathcal{B}^{\dagger}*\mathcal{E}_1^{\mathrm{T}}.
			\end{aligned}
		\end{equation}
		Utilizing  $\mathcal{B}*\mathcal{Y}_2^{\mathrm{T}}*\mathcal{E}_2*\mathcal{B}^{\dagger}*\mathcal{E}_1^{\mathrm{T}}=\mathcal{E}_1^{\mathrm{T}}$,  $\mathcal{B}*\mathcal{Y}_2^{\mathrm{T}}*\mathcal{E}_2*\mathcal{X}_2^{\mathrm{T}}=\mathcal{B}*\mathcal{X}_2^{\mathrm{T}}$, and  $\mathcal{A}*\mathcal{A}^{\dagger}*\mathcal{X}_1=\mathcal{X}_1$, (\ref{4.5}) can be further reduced to
		\begin{equation*}
			\mathcal{M}*\mathcal{M}^{\dagger}=\mathcal{A}*\mathcal{A}^{\dagger}+\mathcal{Y}_1*\mathcal{E}_1^{\mathrm{T}}.
		\end{equation*}
		Therefore,
		\begin{equation*}
			\begin{aligned}
				(\mathcal{M}*\mathcal{M}^{\dagger})^{\mathrm{T}}&=(\mathcal{A}*\mathcal{A}^{\dagger}+\mathcal{Y}_1*\mathcal{E}_1^{\mathrm{T}})^{\mathrm{T}}\\
				&=(\mathcal{A}*\mathcal{A}^{\dagger})^{\mathrm{T}}+(\mathcal{Y}_1*\mathcal{E}_1^{\mathrm{T}})^{\mathrm{T}}=\mathcal{M}*\mathcal{M}^{\dagger},
			\end{aligned}
		\end{equation*}
		which implies (c).		
By mimicking the above argument, we obtain that
		\begin{equation*}
			(\mathcal{M}^{\dagger}*\mathcal{M})^{\mathrm{T}}=\mathcal{A}^{\dagger}*\mathcal{A}+\mathcal{E}_2*\mathcal{Y}_2^{\mathrm{T}},
		\end{equation*}
		which shows that (d) is valid.
		
		Moreover, we can get that
		\begin{equation}\label{4.6}
			\begin{aligned}
				\mathcal{M}*\mathcal{M}^{\dagger}*\mathcal{M}=&(\mathcal{A}*\mathcal{A}^{\dagger}+\mathcal{Y}_1*\mathcal{E}_1^{\mathrm{T}})*(\mathcal{A}+(\mathcal{X}_1+\mathcal{Y}_1)*\mathcal{B}*(\mathcal{X}_2+\mathcal{Y}_2)^{\mathrm{T}})\\
				=&\mathcal{A}*\mathcal{A}^{\dagger}*\mathcal{A}+\mathcal{Y}_1*\mathcal{E}_1^{\mathrm{T}}*\mathcal{A}\\
				&+ \mathcal{A}*\mathcal{A}^{\dagger}*(\mathcal{A}+(\mathcal{X}_1+\mathcal{Y}_1)*\mathcal{B}*(\mathcal{X}_2+\mathcal{Y}_2)^{\mathrm{T}})\\
				&+\mathcal{Y}_1*\mathcal{E}_1^{\mathrm{T}}*(\mathcal{A}+(\mathcal{X}_1+\mathcal{Y}_1)*\mathcal{B}*(\mathcal{X}_2+\mathcal{Y}_2)^{\mathrm{T}})\\
				=&\mathcal{A}+\mathcal{Y}_1*\mathcal{E}_1^{\mathrm{T}}*\mathcal{A}+\mathcal{A}*\mathcal{A}^{\dagger}*\mathcal{X}_1*\mathcal{B}*(\mathcal{X}_2+\mathcal{Y}_2)^{\mathrm{T}}\\
				&+\mathcal{A}*\mathcal{A}^{\dagger}*\mathcal{Y}_1*\mathcal{B}*(\mathcal{X}_2+\mathcal{Y}_2)^{\mathrm{T}}+\mathcal{Y}_1*\mathcal{E}_1^{\mathrm{T}}*\mathcal{X}_1*\mathcal{B}*(\mathcal{X}_2+\mathcal{Y}_2)^{\mathrm{T}}\\
				&+\mathcal{Y}_1*\mathcal{E}_1^T*\mathcal{Y}_1*\mathcal{B}*(\mathcal{X}_2+\mathcal{Y}_2)^{\mathrm{T}}.\\
			\end{aligned}
		\end{equation}
		Applying  $\mathcal{A}*\mathcal{A}^{\dagger}*\mathcal{X}_1=\mathcal{X}_1,\mathcal{A}^{\dagger}*\mathcal{Y}_1=0,\mathcal{Y}_1^{\mathrm{T}}*\mathcal{X}_1^{\mathrm{T}}=0$ and $\mathcal{Y}_1*\mathcal{E}_1^{\mathrm{T}}*\mathcal{Y}_1=\mathcal{Y}_1$, (\ref{4.6}) can be simplified as
		\begin{equation*}
			\begin{aligned}
				\mathcal{M}*\mathcal{M}^{\dagger}*\mathcal{M}&=\mathcal{A}+\mathcal{X}_1*\mathcal{B}*(\mathcal{X}_2+\mathcal{Y}_2)^{\mathrm{T}}+\mathcal{Y}_1*\mathcal{B}*(\mathcal{X}_2+\mathcal{Y}_2)^{\mathrm{T}}\\
				&=\mathcal{A}+(\mathcal{X}_1+\mathcal{Y}_1)*\mathcal{B}*(\mathcal{X}_2+\mathcal{Y}_2)^{\mathrm{T}}=\mathcal{M},
			\end{aligned}
		\end{equation*}
		which is precisely (a), and (b) can be proved analogously. 
		
		\hskip 2em Since all requirements for the  definition of the Moore-Penrose inverse have been fulfilled, the theorem is proved.
	\end{proof}

\hskip 2em Then we give a related lemma necessary for our derivation. According to \cite[Theorem 3.3]{Jin2017}, it is straightforward to obtain the following lemma.
\begin{lemma}\label{SMW_JIN}
	Suppose $\mathcal{A} \in \mathbb{R}^{n_1\times n_2\times n_3}$, $\mathcal{D}\in \mathbb{R}^{n_1\times n_4\times n_3}$ and $\mathcal{X} \in \mathbb{R}^{n_2\times n_4\times n_3}$. Then the tensor equation
	\begin{equation*}
		\mathcal{A}*\mathcal{X}=\mathcal{D},
	\end{equation*}
	has a solution if and only if $\mathcal{A}*\mathcal{A}^{\dagger}*\mathcal{D}=\mathcal{D}$. The solution can be expressed as
	\begin{equation*}
		\mathcal{X}=\mathcal{A}^{\dagger}*\mathcal{D}+(\mathcal{I}-\mathcal{A}^{\dagger}*\mathcal{A})*\mathcal{Y},
	\end{equation*}
	where $\mathcal{Y}\in\mathbb{R}^{n_1\times n_2\times n_3}$ is an arbitrary tensor.
\end{lemma}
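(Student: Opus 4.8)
The plan is to prove both halves of the statement—the consistency criterion and the parametrization of all solutions—by direct t-product manipulations, relying only on the defining identity $\mathcal{A}*\mathcal{A}^{\dagger}*\mathcal{A}=\mathcal{A}$ from the Moore--Penrose conditions together with the associativity of the t-product. An equivalent route would reduce everything slice-wise to the Fourier domain exactly as in Lemma 5.1 and Theorem 4.1: block-diagonalizing via (\ref{DFT}) turns $\mathcal{A}*\mathcal{X}=\mathcal{D}$ into the matrix systems $\bar{A}^{(i)}\bar{X}^{(i)}=\bar{D}^{(i)}$, turns $\mathcal{A}^{\dagger}$ into the slice-wise pseudoinverses $(\bar{A}^{(i)})^{\dagger}$, and lets the classical matrix consistency theorem act on each frontal slice before transforming back. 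I would present the direct version, since it is self-contained.

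First I would treat solvability. For necessity, suppose $\mathcal{A}*\mathcal{X}_0=\mathcal{D}$ for some $\mathcal{X}_0$; then right-multiplying $\mathcal{A}*\mathcal{A}^{\dagger}*\mathcal{A}=\mathcal{A}$ by $\mathcal{X}_0$ and substituting $\mathcal{A}*\mathcal{X}_0=\mathcal{D}$ yields $\mathcal{A}*\mathcal{A}^{\dagger}*\mathcal{D}=\mathcal{A}*\mathcal{A}^{\dagger}*\mathcal{A}*\mathcal{X}_0=\mathcal{A}*\mathcal{X}_0=\mathcal{D}$, which is the asserted condition. For sufficiency, assuming $\mathcal{A}*\mathcal{A}^{\dagger}*\mathcal{D}=\mathcal{D}$, the candidate $\mathcal{X}=\mathcal{A}^{\dagger}*\mathcal{D}$ already satisfies $\mathcal{A}*(\mathcal{A}^{\dagger}*\mathcal{D})=\mathcal{D}$, so the equation is consistent.

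Next I would establish the solution set. That every member of the stated family solves the equation is a one-line check: using $\mathcal{A}*\mathcal{A}^{\dagger}*\mathcal{D}=\mathcal{D}$ and $\mathcal{A}*\mathcal{A}^{\dagger}*\mathcal{A}=\mathcal{A}$ gives $\mathcal{A}*(\mathcal{A}^{\dagger}*\mathcal{D}+(\mathcal{I}-\mathcal{A}^{\dagger}*\mathcal{A})*\mathcal{Y})=\mathcal{D}+(\mathcal{A}-\mathcal{A}*\mathcal{A}^{\dagger}*\mathcal{A})*\mathcal{Y}=\mathcal{D}$ for every $\mathcal{Y}$. The only step needing a small idea is the converse, that the parametrization captures \emph{all} solutions: given an arbitrary solution $\mathcal{X}$, I would set $\mathcal{Y}=\mathcal{X}$ and use the consequence $\mathcal{A}^{\dagger}*\mathcal{A}*\mathcal{X}=\mathcal{A}^{\dagger}*\mathcal{D}$ of $\mathcal{A}*\mathcal{X}=\mathcal{D}$ to obtain $\mathcal{A}^{\dagger}*\mathcal{D}+(\mathcal{I}-\mathcal{A}^{\dagger}*\mathcal{A})*\mathcal{X}=\mathcal{A}^{\dagger}*\mathcal{D}+\mathcal{X}-\mathcal{A}^{\dagger}*\mathcal{D}=\mathcal{X}$. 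Equivalently, the difference of any two solutions $\mathcal{Z}$ satisfies $\mathcal{A}*\mathcal{Z}=\mathbf{0}$, and then $\mathcal{Z}=(\mathcal{I}-\mathcal{A}^{\dagger}*\mathcal{A})*\mathcal{Z}$, so $\mathcal{I}-\mathcal{A}^{\dagger}*\mathcal{A}$ generates exactly the null space of $\mathcal{Z}\mapsto\mathcal{A}*\mathcal{Z}$.

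I do not anticipate a genuine obstacle: this is the t-product analogue of the classical matrix consistency theorem, and the t-product inherits associativity and the four Moore--Penrose identities frontal-slice-wise through the block-diagonalization (\ref{DFT}), so each displayed manipulation reduces to an identity already recorded in the preliminaries. The single point demanding care is bookkeeping of formats so that every t-product conforms—in particular $\mathcal{I}$ must be the $n_2\times n_2\times n_3$ identity and the free parameter must have format $n_2\times n_4\times n_3$ for $(\mathcal{I}-\mathcal{A}^{\dagger}*\mathcal{A})*\mathcal{Y}$ to be well defined—after which the argument is routine.
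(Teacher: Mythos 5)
Your proof is correct, but it does not follow the paper's route for the simple reason that the paper has no proof of this lemma: it merely remarks that the result is straightforward from \cite[Theorem 3.3]{Jin2017} and states it without argument. What you supply is therefore a self-contained verification of what the paper leaves to citation. Your direct argument---necessity from $\mathcal{A}*\mathcal{A}^{\dagger}*\mathcal{A}=\mathcal{A}$, sufficiency by exhibiting the particular solution $\mathcal{A}^{\dagger}*\mathcal{D}$, membership of the whole family by expansion, and completeness of the parametrization by taking $\mathcal{Y}=\mathcal{X}$ so that $\mathcal{A}^{\dagger}*\mathcal{D}+(\mathcal{I}-\mathcal{A}^{\dagger}*\mathcal{A})*\mathcal{X}=\mathcal{X}$---uses only the first Penrose identity together with associativity and distributivity of the t-product, so it is elementary and independent of any transform machinery. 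The alternative you sketch but set aside (block-diagonalize via (\ref{DFT}), apply the classical matrix consistency theorem to each slice system $\bar{A}^{(i)}\bar{X}^{(i)}=\bar{D}^{(i)}$, and transform back) is the style this paper uses in most of its other proofs (e.g., Lemma 5.1, Theorems 4.1--4.2) and is presumably how the cited result of Jin et al.\ is obtained; your direct version buys self-containedness at no cost in length. A further point in your favor: your bookkeeping remark is not pedantry but catches an actual error in the statement---for $(\mathcal{I}-\mathcal{A}^{\dagger}*\mathcal{A})*\mathcal{Y}$ to conform, $\mathcal{I}$ must be the $n_2\times n_2\times n_3$ identity and the free parameter must lie in $\mathbb{R}^{n_2\times n_4\times n_3}$, whereas the lemma as printed declares $\mathcal{Y}\in\mathbb{R}^{n_1\times n_2\times n_3}$, which is a typo.
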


\hskip 2em Now we are ready to apply the SMW formula to perform the sensitivity analysis
for a multilinear system of equations $\mathcal{A}*\mathcal{X}=\mathcal{D}$, and we derive
upper bounds for the error in the solution when the coefficient tensor and the right-hand side are perturbed.
\begin{theorem}
	Suppose the multilinear system of equations is
	\begin{equation*}
		\mathcal{A}*\mathcal{X}=\mathcal{D}
	\end{equation*}
	where $\mathcal{A} \in \mathbb{R}^{n_1\times n_2\times n_3},\mathcal{X} \in \mathbb{R}^{n_2\times n_4\times n_3}$ and $\mathbf{0}\neq\mathcal{D} \in \mathbb{R}^{n_1\times n_4\times n_3}$. The perturbed system is expressed as
	\begin{equation*}
		(\mathcal{A}+\mathcal{E})*\mathcal{Y}=(\mathcal{D}+\mathcal{H}),
	\end{equation*}
where $\mathcal{E}\in \mathbb{R}^{n_1\times n_2\times n_3}$ and $\mathcal{H} \in \mathbb{R}^{n_1\times n_4\times n_3}$. If the tensor $\mathcal{E}$ is decomposed as
	\begin{equation*}
		\mathcal{E}=\mathcal{U}*\mathcal{B}*\mathcal{V}=(\mathcal{X}_1+\mathcal{Y}_1)*\mathcal{B}*(\mathcal{X}_2+\mathcal{Y}_2)^T,
	\end{equation*}
	where the lateral slices of $\mathcal{X}_1$, $\mathcal{X}_2$ are contained in $ \mathrm{R}(\mathcal{A})$ and $\mathrm{R}(\mathcal{A}^{\mathrm{T}})$ respectively and the lateral slices of $\mathcal{Y}_1$, $\mathcal{Y}_2$ are orthogonal to $\mathrm{R}(\mathcal{A})$ and  $\mathrm{R}(\mathcal{X}^{\mathrm{T}})$ severally.
	
	\hskip 2em We further assume that $\|\mathcal{X}_i\|\le\epsilon_A\|\mathcal{A}\|$, $\|\mathcal{E}_i\|\le\epsilon_A\|\mathcal{A}\|$, $\|\mathcal{B}^{\dagger}\|\le\epsilon_A\|\mathcal{A}\|$ and $\|\mathcal{H}\|\le\epsilon_D\|\mathcal{D}\|$ for $1\le i \le 2$, then
	\begin{equation}\label{app1}
		\begin{aligned}
			\frac{\|\mathcal{Y}-\mathcal{X}\|_F}{\|\mathcal{X}\|_F}\le&(1+\epsilon_D)\|\mathcal{D}\|^3_F\|\mathcal{X}\|_F\left(2n_3^2\epsilon_A^2\mathcal{A}^{\dagger}+n_3^2\epsilon_A^3\|\mathcal{A}\|_F+n_3^3\epsilon_A^4\|\mathcal{A}\|_F^2\|\mathcal{A}^{\dagger}\|_F\right)\\&+n_3\epsilon_D\|\mathcal{A}\|_F\|\mathcal{A}^{\dagger}\|_F,
		\end{aligned}
	\end{equation}
	and
	\begin{equation*}
		\begin{aligned}
			\frac{\|\mathcal{Y}-\mathcal{X}\|_2}{\|\mathcal{X}\|_2}\le&(1+\epsilon_D)\|\mathcal{D}\|^3_2\|\mathcal{X}\|_2\left(2\epsilon_A^2\mathcal{A}^{\dagger}+\epsilon_A^3\|\mathcal{A}\|_2+\epsilon_A^4\|\mathcal{A}\|_2^2\|\mathcal{A}^{\dagger}\|_2\right)\\&+\epsilon_D\|\mathcal{A}\|_2\|\mathcal{A}^{\dagger}\|_2.
		\end{aligned}
	\end{equation*}
\end{theorem}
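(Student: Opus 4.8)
The plan is to represent both the exact and the perturbed solutions through the Moore--Penrose inverse and then measure their difference with the closed form supplied by the SMW formula. By Lemma~\ref{SMW_JIN}, consistency of $\mathcal{A}*\mathcal{X}=\mathcal{D}$ makes $\mathcal{X}=\mathcal{A}^{\dagger}*\mathcal{D}$ the minimal-Frobenius-norm solution; likewise the minimal-norm solution of the perturbed system is $\mathcal{Y}=\mathcal{M}^{\dagger}*(\mathcal{D}+\mathcal{H})$ with $\mathcal{M}=\mathcal{A}+\mathcal{E}$. The prescribed splitting $\mathcal{E}=(\mathcal{X}_1+\mathcal{Y}_1)*\mathcal{B}*(\mathcal{X}_2+\mathcal{Y}_2)^{\mathrm{T}}$ is exactly the structure required by Theorem~\ref{SMW_MOORE}, so first I would record that $\mathcal{M}^{\dagger}$ admits the explicit expression given there and note that consistency of the perturbed system (equivalently, that $\mathcal{Y}$ genuinely solves it) must be in force for $\mathcal{Y}$ to be well defined.

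Next I would form the error tensor and separate the two sources of perturbation by writing
\begin{equation*}
	\mathcal{Y}-\mathcal{X}=(\mathcal{M}^{\dagger}-\mathcal{A}^{\dagger})*(\mathcal{D}+\mathcal{H})+\mathcal{A}^{\dagger}*\mathcal{H},
\end{equation*}
and then inserting the SMW correction
\begin{equation*}
	\mathcal{M}^{\dagger}-\mathcal{A}^{\dagger}=-\mathcal{E}_2*\mathcal{X}_2^{\mathrm{T}}*\mathcal{A}^{\dagger}-\mathcal{A}^{\dagger}*\mathcal{X}_1*\mathcal{E}_1^{\mathrm{T}}+\mathcal{E}_2*(\mathcal{B}^{\dagger}+\mathcal{X}_2^{\mathrm{T}}*\mathcal{A}^{\dagger}*\mathcal{X}_1)*\mathcal{E}_1^{\mathrm{T}}.
\end{equation*}
This presents $\mathcal{Y}-\mathcal{X}$ as the right-hand-side term $\mathcal{A}^{\dagger}*\mathcal{H}$ plus four structured products built from $\mathcal{E}_1,\mathcal{E}_2,\mathcal{X}_1,\mathcal{X}_2,\mathcal{A}^{\dagger},\mathcal{B}^{\dagger}$, each post-multiplied by $\mathcal{D}+\mathcal{H}$ (the cross term in the last factor being further split into a $\mathcal{B}^{\dagger}$-product of four factors and an $\mathcal{A}^{\dagger}$-product of six).

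Then I would estimate each piece by the triangle inequality together with the submultiplicativity of Theorem~2.1, applying $\|\mathcal{A}*\mathcal{B}\|_F\le\sqrt{n_3}\|\mathcal{A}\|_F\|\mathcal{B}\|_F$ repeatedly so that a product of $k$ factors contributes $(\sqrt{n_3})^{\,k-1}$. Into these bounds I would substitute the standing hypotheses $\|\mathcal{X}_i\|\le\epsilon_A\|\mathcal{A}\|$, $\|\mathcal{E}_i\|\le\epsilon_A\|\mathcal{A}\|$, $\|\mathcal{B}^{\dagger}\|\le\epsilon_A\|\mathcal{A}\|$ and $\|\mathcal{H}\|\le\epsilon_D\|\mathcal{D}\|$, the last giving $\|\mathcal{D}+\mathcal{H}\|_F\le(1+\epsilon_D)\|\mathcal{D}\|_F$. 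Dividing by $\|\mathcal{X}\|_F$ and using $\|\mathcal{D}\|_F=\|\mathcal{A}*\mathcal{X}\|_F\le\sqrt{n_3}\|\mathcal{A}\|_F\|\mathcal{X}\|_F$ turns the $\mathcal{A}^{\dagger}*\mathcal{H}$ contribution into $n_3\epsilon_D\|\mathcal{A}\|_F\|\mathcal{A}^{\dagger}\|_F$, while the four structured products assemble, according to how many factors of $\epsilon_A$ and of $\|\mathcal{A}^{\dagger}\|_F$ each carries, into the three summands inside the parenthesis of (\ref{app1}). The spectral-norm case proceeds identically, except that Theorem~2.1(b) supplies $\|\mathcal{A}*\mathcal{B}\|_2\le\|\mathcal{A}\|_2\|\mathcal{B}\|_2$ with no dimensional constant, which is why the powers of $n_3$ drop out of the second inequality.

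The routine part is the term-by-term norm estimation. The part requiring care is the bookkeeping that groups the four products into the stated summands and tracks the exact powers of $n_3$ and of $\|\mathcal{A}\|$ and $\|\mathcal{A}^{\dagger}\|$ each product carries, where I expect the main arithmetic effort (and, given the form of the stated constants, the most delicate matching) to lie. A second point needing attention is confirming that the hypotheses truly place us within the scope of Theorem~\ref{SMW_MOORE}: the orthogonality of the lateral slices of $\mathcal{Y}_1,\mathcal{Y}_2$ to $\mathrm{R}(\mathcal{A})$ and $\mathrm{R}(\mathcal{A}^{\mathrm{T}})$ must be shown to entail conditions~3 and~4 there (or adopted as additional standing assumptions), since otherwise the closed form for $\mathcal{M}^{\dagger}$ is unavailable and the entire estimate collapses.
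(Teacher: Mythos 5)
Your proposal follows essentially the same route as the paper's own proof: both take the minimal-norm representations $\mathcal{X}=\mathcal{A}^{\dagger}*\mathcal{D}$ and $\mathcal{Y}=(\mathcal{A}+\mathcal{E})^{\dagger}*(\mathcal{D}+\mathcal{H})$ from Lemma~\ref{SMW_JIN} (the paper phrases this as setting $\mathcal{U}=\mathbf{0}$ in the general solution), substitute the SMW expression of Theorem~\ref{SMW_MOORE} for $(\mathcal{A}+\mathcal{E})^{\dagger}$, and then bound the resulting terms via the triangle inequality, the $\sqrt{n_3}$-submultiplicativity of the Frobenius norm, the standing hypotheses, and $\|\mathcal{D}\|_F\le\sqrt{n_3}\|\mathcal{A}\|_F\|\mathcal{X}\|_F$. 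Your closing caveat---that the stated orthogonality hypotheses alone do not guarantee conditions 3 and 4 of Theorem~\ref{SMW_MOORE}, so these must be assumed for the closed form of $(\mathcal{A}+\mathcal{E})^{\dagger}$ to be available---is legitimate, and applies equally to the paper's proof, which invokes that theorem without verifying them.
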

\begin{proof}
	We only consider the Frobenius norm case.
	It follows from Lemma \ref{SMW_JIN} that $\mathcal{X}$ can be expressed as
	\begin{equation*}
		\mathcal{X}=\mathcal{A}^{\dagger}*\mathcal{D}+(\mathcal{I}-\mathcal{A}^{\dagger}*\mathcal{A})*\mathcal{U}.
	\end{equation*}
	Analogously, $\mathcal{Y}$ can be expressed as
	\begin{equation*}
		\mathcal{Y}=(\mathcal{A+E})^{\dagger}*(\mathcal{D}+\mathcal{H})+(\mathcal{I}-(\mathcal{A}+\mathcal{E})^{\dagger}*(\mathcal{A+E}))*\mathcal{U}.
	\end{equation*}
Particularly, if we set $\mathcal{U}=\mathbf{0}$, combining with Theorem \ref{SMW_MOORE}, the following equation holds.
	\begin{equation*}
		\begin{aligned}
			\mathcal{Y}-\mathcal{X}=&(\mathcal{A}+\mathcal{E})^{\dagger}*(\mathcal{D}+\mathcal{H})-\mathcal{A}^{\dagger}*\mathcal{D}\\
			=&[\mathcal{A}+(\mathcal{X}_1+\mathcal{Y}_1)*\mathcal{B}*(\mathcal{X}_2+\mathcal{Y}_2)^{\mathrm{T}}]^{\dagger}*(\mathcal{D}+\mathcal{H})-\mathcal{A}^{\dagger}*\mathcal{D}\\
			=&(\mathcal{A}^{\dagger}-\mathcal{E}_2*\mathcal{X}_2^{\mathrm{T}}*\mathcal{A}^{\dagger}-\mathcal{A}^{\dagger}*\mathcal{X}_1*\mathcal{E}_1^{\mathrm{T}}+\mathcal{E}_2*(\mathcal{B}^{\dagger}+\mathcal{X}_2^{\mathrm{T}}*\mathcal{A}^{\dagger}*\mathcal{X}_1)*\mathcal{E}_1^{\mathrm{T}})*\\&(\mathcal{D}+\mathcal{H})-\mathcal{A}^{\dagger}*\mathcal{D}\\
			=&\mathcal{E}_2*\mathcal{X}_2^{\mathrm{T}}*\mathcal{A}^{\dagger}*\mathcal{D}-\mathcal{A}^{\dagger}*\mathcal{X}_1*\mathcal{E}_1*\mathcal{D}+\mathcal{E}_2*(\mathcal{B}^{\dagger}+\mathcal{X}_2^{\mathrm{T}}*\mathcal{A}^{\dagger}*\mathcal{X}_1)*\mathcal{E}_1^{\mathrm{T}}*\mathcal{D}+\\
			&\mathcal{A}^{\dagger}*\mathcal{H}-\mathcal{E}_2*\mathcal{X}_2^{\mathrm{T}}*\mathcal{A}^{\dagger}*\mathcal{D}^{\dagger}-\mathcal{A}^{\dagger}*\mathcal{X}_1)*\mathcal{E}_1^{\mathrm{T}}*\mathcal{H}+\\
			&\mathcal{E}_2*(\mathcal{B}^{\dagger}+\mathcal{X}_2^{\mathrm{T}}*\mathcal{A}^{\dagger}*\mathcal{X}_1)*\mathcal{E}_1^{\mathrm{T}}*\mathcal{H}.
		\end{aligned}
	\end{equation*}
	Taking the Frobenius norm on both sides of the relation above and utilizing the properties of the tensor norm, we obtain
	\begin{equation*}
		\begin{aligned}		\|\mathcal{Y}-\mathcal{X}\|_F\le&n_3^{3/2}\|\mathcal{E}_2\|_F\|\mathcal{X}_2^\mathrm{T}\|_F\|\mathcal{A}^{\dagger}\|_F\|\mathcal{D}\|_F+n_3^{3/2}\|\mathcal{A}^{\dagger}\|_F\|\mathcal{X}_1\|_F\|\mathcal{E}_1^\mathrm{T}\|_F\|\mathcal{D}\|_F+\\
			&n_3^{3/2}\|\mathcal{E}_2\|_F\|\mathcal{B}^{\dagger}\|_F\|\mathcal{E}_1^{\mathrm{T}}\|_F\|\mathcal{D}\|_F+n_3^2\|\mathcal{X}^\mathrm{T}\|_F\|\mathcal{A}^{\dagger}\|_F\|\mathcal{X}_1\|_F\|\mathcal{E}_1^\mathrm{T}\|_F\|\mathcal{D}\|_F+\\	&n_3^{1/2}\|\mathcal{A}^{\dagger}\|_F\|\mathcal{H}\|_F+n_3^{3/2}\|\mathcal{E}_2\|_F\|\mathcal{X}_2^{\mathrm{T}}\|_F\|\mathcal{A}^{\dagger}\|_F\|\mathcal{H}\|_F+\\
			&n_3^{3/2}\|\mathcal{A}^{\dagger}\|_F\|\mathcal{X}_1\|_F\|\mathcal{E}_1^{\mathrm{T}}\|_F\|\mathcal{H}\|_F+\|n_3^{3/2}\mathcal{E}_2\|_F\|\mathcal{B}^{\dagger}\|_F\|\mathcal{E}_1^{\dagger}\|_F\|\mathcal{H}\|_F+\\
			&n_3^2\|\mathcal{X}_2^{\mathrm{T}}\|_F\|\mathcal{A}^{\dagger}\|_F\|\mathcal{X}_1\|_F\|\mathcal{E}_1^{\mathrm{T}}\|_F\|\mathcal{H}\|_F.
		\end{aligned}
	\end{equation*}
	Recall that $\|\mathcal{X}_i\|_F\le\epsilon_A\|\mathcal{A}\|_F$, $\|\mathcal{E}_i\|_F\le\epsilon_A\|\mathcal{A}\|_F$, $\|\mathcal{B}^{\dagger}\|_F\le\epsilon_A\|\mathcal{A}\|_F$, and  $\|\mathcal{H}\|_F\le\epsilon_D\|\mathcal{D}\|_F$, then
	\begin{equation*}
		\begin{aligned}
			\|\mathcal{Y}-\mathcal{X}\|_F\le&(1+\epsilon_D)\|\mathcal{D}\|_F(2n_3^{3/2}\epsilon_A^2\|\mathcal{A}\|_F^2\|\mathcal{A}^{\dagger}\|_F^2+n_3^{3/2}\epsilon_A^3\|\mathcal{A}\|_F^3\|_F+\\&n_3^{5/2}\epsilon_A^4\|\mathcal{A}\|_F^4\|\mathcal{A}^{\dagger}\|_F)+n_3^{1/2}\|\mathcal{A}^{\dagger}\|_F\|\mathcal{D}\|_F.
		\end{aligned}
	\end{equation*}
	Since $\|\mathcal{D}\|_F=\|\mathcal{A}*\mathcal{X}\|_F\le n_3^{1/2}\|\mathcal{A}\|_F\|\mathcal{X}\|_F$, the inequality can be further reduced to
	\begin{equation*}
		\begin{aligned}
			\frac{\|\mathcal{Y}-\mathcal{X}\|_F}{\|\mathcal{X}\|_F}\le& (1+\epsilon_D)(2n_3^2\epsilon_A^2\|\mathcal{A}^{\dagger}\|_F+n_3^2\epsilon_A^3\|\mathcal{A}\|_F+n_3^3\epsilon_A^4\|\mathcal{A}\|_F^2\|\mathcal{A}^{\dagger}\|_F)+\\&n_3\epsilon_D\|\mathcal{A}^{\dagger}\|_F\|\mathcal{A}\|_F,
		\end{aligned}
	\end{equation*}
	which leads to (\ref{app1}).
\end{proof}	
	\section{Conclusions}
	\hskip 2em In this paper, we give perturbation theorems of the tensor inverses and tensor equations.
     Additionally, we present a collection of perturbation results for the tensor Moore-Penrose inverse and the tensor least squares problem. Meanwhile, the sensitivity analysis for the multilinear system of equations is also performed by the extension of the SMW formula.
	
	\hskip 2em  
	In contrast to the Moore-Penrose inverse, the Drazin inverse based on the t-product has already been considered. 
	Naturally, we may consider the corresponding perturbation in the future.
	
    \section*{Acknowledgments}
	\hskip 2em
        This work is supported by the National Natural Science Foundation of China under grant 11801534.

	\bibliographystyle{siam}
	\bibliography{perturbation}
\end{document}